\def\e{\varepsilon}
       \newtheorem{lemma}{\bf Lemma}[section]
       \newtheorem{theorem}{\bf Theorem}[section]
       \newtheorem{corollary}{\bf Corollary}[section]
       \newtheorem{definition}{\bf Definition}[section]
       \newtheorem{remark}{\bf Remark}[section]
       \numberwithin{equation}{section}
\begin{document}
\title{\textbf{Moser's Theorem with Frequency-preserving}}
\author{Chang Liu $^{1}$, Zhicheng Tong $^{*,2}$, Yong Li $^{1,2}$ }

\renewcommand{\thefootnote}{}
\footnotetext{\hspace*{-6mm}
\begin{tabular}{l l}
 $^{*}$~~Corresponding author. E-mail addresses: tongzc20@mails.jlu.edu.cn\\
 $^{1}$~~~School of Mathematics and Statistics, Center for Mathematics and Interdisciplinary Sciences,\\
 ~~~ Northeast Normal University, Changchun 130024, People's Republic of China.\\
$^{2}$~~~School of Mathematics, Jilin University, Changchun 130012, People's Republic of China.
\end{tabular}}

\date{}
\maketitle

\begin{abstract}

This paper mainly concerns the KAM persistence of the mapping $\mathscr{F}:\mathbb{T}^{n}\times E\rightarrow \mathbb{T}^{n}\times \mathbb{R}^{n}$ with intersection property, where $E\subset \mathbb{R}^{n}$ is a connected closed bounded domain with interior points.  By assuming that the frequency mapping satisfies certain topological degree condition and weak convexity condition, we prove some Moser type results about the invariant torus of mapping $\mathscr{F}$ with frequency-preserving under small perturbations. To our knowledge, this is the first approach to Moser's theorem with frequency-preserving. Moreover, given perturbed mappings over $ \mathbb{T}^n $, it is shown that such persistence still holds when the frequency mapping and perturbations are only continuous about parameter  beyond Lipschitz or even H\"older type. We also touch the parameter without dimension limitation problem under such settings. \\
\\
{\bf Keywords:} {Mapping with intersection property, invariant torus, frequency-preserving.}\\
{\bf2020 Mathematics Subject Classification:} {37E40, 37J40.}
\end{abstract}

\section{Introduction}
\setcounter{equation}{0}
 The celebrated  KAM theory is established by Kolmogorov \cite{Ko}, Arnold \cite{Ar} and Moser \cite{Mo}. It mainly concerns the stability of motions or orbits in dynamical systems under small perturbations and indeed has a long history of over sixty years. So far, as well known, KAM theory has been widely spread to various  systems, such as volume-preserving flows due to Broer et al  \cite{Br}, generalized Hamiltonian systems due to Parasyuk \cite{Pa} and Li and Yi \cite{L1,L2}, finitely differentiable Hamiltonians due to Salamon \cite{Sa}, Bounemoura \cite{Ab} and Koudjinan \cite{CK}, Gevrey Hamiltonians due to Popov \cite{Po}, and multiscale dynamical systems due to Qian et al \cite{QW}.  For other related results, see for instance,  \cite{QL,CQ, MR1938331,MR1302146,MR1483538,MR1843664,MR1354540}. In studying discrete dynamical systems,    Moser \cite{Mo} firstly  studied the persistence of invariant tori of twist mappings with perturbation only on the action variable, namely of the following form
		\[	\mathcal{G}:
		\begin{cases}
			\theta^{1}=\theta+r,\\
			r^{1}=r+ g(\theta,r),
		\end{cases}\]
%	\begin{equation}\notag
%		(\theta, r) \mapsto G(\theta, r)=(\theta+r, r+g(\theta, r)),
%	\end{equation}	
where $ g $ is assumed to be of class $ C^{333} $. He stated a differentiable invariant curve theorem, which also is of great importance for the study of stability of periodic solutions. An analytic invariant curve theorem was provided in \cite{Si} by Siegel and Moser. Concentrating on finitely differentiable case, Svanidze established a KAM theorem for twist mappings in \cite{SV}. The version of class $C^{\alpha}$ with $\alpha>4$ and the optimal situation of class $C^{\alpha}$ with $\alpha>3$ about mappings on the annulus were %given by 
due to R\"ussmann \cite{Ru} and Herman \cite{He,H1}, respectively.  For relevant works on the existence of invariant tori of voluming-preserving mappings, see Cheng and Sun \cite{CS} $(3{\rm -dimensional})$, Feng and Shang \cite{FS}, Shang \cite{MR1790659} and Xia \cite{XZ} $(n\geq 3)$. Cong et al \cite{CL} gave the persistence of the invariant tori when considering mappings with the intersection property, which has different numbers of actions and angular variables.  Levi and Moser \cite{ML} gave a Lagrange proof of the invariant curve theorem for twist mappings using the method introduced by Moser \cite{M1}. The invariant curve theorem for quasi-periodic reversible mappings was studied by Liu \cite{LB}. For some reversible mappings, see Sevryuk's book \cite{Se}. Recently, Yang and Li \cite{YL}, Zhao and Li \cite{ZL}  have extended the existence of invariant tori to resonance surfaces of twist mappings and multiscale mappings, respectively. Apart from above, Liu and Xing \cite{LX} presented a new proof of Moser's theorem for twist mappings with a parameter.

Among the KAM theory for Hamiltonian systems, the preservation of prescribed frequency is also important in studying invariance of dynamics under small perturbations, see Salamon \cite{Sa} for instance, especially without certain nondegeneracy such as Kolmogorov or  R\"ussmann  conditions, see Du et al \cite{DL} and Tong et al \cite{TD}. However, as generally known, the frequency of dynamical systems may have a drift effect by the perturbations during the KAM iteration, and therefore %obtaining 
it is indeed difficult to frequency-preserving. %is indeed difficult. 
\textit{To the best of our knowledge, there are no KAM results for twist mappings on this aspect, no one knows whether the prescribed frequency could be preserved for an invariant torus.} In this paper, we will touch this question. To this end,  it is necessary to propose some transversality conditions involving topological degree condition as well as certain weak convexity condition to overcome the drift of frequency, see \cite{DL, TD} and the references therein. Based on them, we will establish the KAM persistence with frequency-preserving for twist mappings with  intersection property. The so-called intersection property is that any torus close to the invariant torus of the unperturbed system intersects its image under the mapping.  More precisely, denote by  $\mathbb{T}^{n}=\mathbb{R}^{n}/ 2\pi\mathbb{Z}^{n}$ the $n$-dimensional torus, and let $E\subset \mathbb{R}^{n}$ be a connected closed bounded domain with interior points. Then consider the twist  mapping $\mathscr{F}:\mathbb{T}^{n}\times E\rightarrow \mathbb{T}^{n}\times\mathbb{R}^{n}$ with intersection property
		\begin{equation}\label{equation-1}
			\mathscr{F}:
			\begin{cases}
				\theta^{1}=\theta+\omega(r)+\e f(\theta,r,\e),\\
				r^{1}=r+\e g(\theta,r,\e),
			\end{cases}
		\end{equation}
		where the perturbations $f$ and $g$ are real analytic about $(\theta,r)$ on $\mathbb{T}^{n}\times E$,  $\omega$ is  assumed to be only continuous about $r$ on $E$, and $ \varepsilon $ is a sufficiently small scalar.  By introducing parameter translation technique, we   prove in Theorem \ref{theorem-1} the persistence of invariant tori of  such a family of twist mappings with the frequency unchanged under small perturbations, and as a byproduct,  \textit{this gives rise to the first result for  Moser's theorem with frequency-preserving.}  Moreover, using similar approach, we also investigate the perturbed mapping $\mathscr{F}:\mathbb{T}^{n}\times \Lambda\rightarrow \mathbb{T}^{n}$  with parameter
			\begin{equation}\label{intro2}
				\mathscr{F}:	\theta^{1}=\theta+\omega(\xi)+\e f(\theta,\xi,\e),
			\end{equation}
			provided with $ \Lambda $ the same as $ E $, and $ \varepsilon $ is a sufficiently small scalar. The perturbation $ f $ is analytic about $ \theta $ on $ \mathbb{T}^n $, and \textit{only continuity} with respect to the parameter $ \xi \in \Lambda $ is assumed for $ f $ and $ \omega $. \textit{Under such weak settings, we show the unexpected frequency-preserving KAM persistence via transversality conditions in Theorem \ref{theorem-2}.} As an explicit example, one could deal with irregular perturbations, such as nowhere differentiable systems.  

 This paper is organized as follows. Section \ref{SEC2} introduces some basic notations on modulus of continuity. In Section \ref{SEC3}, we state Theorem \ref{theorem-1} with frequency-preserving for twist mapping \eqref{equation-1}   satisfying the intersection property. When $n=1$, we obtain Moser's invariant curve theorem with frequency-preserving, see Corollary \ref{cor-1}. Theorem \ref{theorem-2} concerns mapping \eqref{intro2} with only angular variables, and shows the persistence of invariant torus with frequency-preserving, where the perturbation $f(\theta,\xi,\e)$ is real analytic about $\theta$, continuous about the parameter $\xi$, and the frequency $\omega(\xi)$ is also continuous about $\xi$. To emphasize the weak regularity, we provide Corollary \ref{COROMH} with nowhere H\"older about parameter. Some discussions involving \textit{parameter without dimensional limitation problem} are also given in Section \ref{SEC3}. The one cycle of KAM steps from $\nu$-th to $(\nu+1)$-th step is shown in Section \ref{SEC4}. In more detail, instead of digging out a series of decreasing domains for frequency, we construct a translation  to keep frequency unchanged during the iterative process. In addition, we have to construct a conjugate mapping to overcome the loss of intersection property. Finally, Section \ref{SEC5} is devoted to the proof of our main results.

 \section{Preliminaries}\label{SEC2}
\setcounter{equation}{0}
To describe only continuity, we first introduce some definitions in this section, involving the modulus of continuity and the norm based on it.
\begin{definition}\label{def-1}
A modulus of continuity is denoted as $\varpi(x)$, which is a strictly monotonic increasing continuous function on $\mathbb{R}^{+}$ that satisfies
 \begin{equation*}
\lim_{x\rightarrow 0^{+}}\varpi(x)=0,
\end{equation*}
and
\begin{equation*}
\varlimsup_{x\rightarrow 0^{+}}\frac{x}{\varpi(x)}<+\infty.
\end{equation*}
\end{definition}
\begin{definition}
%A function $f(x)$ is called $\varpi$ continuous about $ x $, if for fixed $x$, there exists a modulus of continuous $\varpi$ such that
Let a modulus of continuity $ \varpi $ be given. A function $f(x)$ is said to be $\varpi$ continuous about $ x $, if
\begin{equation*}
|f(x)-f(y)|\leq \varpi(|x-y|),\qquad \forall\ 0<|x-y|\leq 1.
\end{equation*}
\end{definition}

It is well known that a mapping defined on a bounded connected closed set in a finite dimensional space must admit a modulus of continuity, see  \cite{Herman3,KO}. For example, for a function $ f(x) $ defined on $ [0,1]  \subset {\mathbb{R}^1} $, it automatically admits a modulus of continuity
\[{\varpi _{f }}\left( x \right): = \mathop {\sup }\limits_{y \in \left[ {0,1} \right],0 < \left| {x - y} \right| \leq 1 } \left| {f\left( x \right) - f\left( y \right)} \right|.\]
We therefore only concentrate on modulus of continuity throughout this paper, especially in Theorem \ref{theorem-2}.

Next we will introduce the comparison relation between the strength and the weakness of modulus of continuity.

\begin{definition}\label{DE2.3}
Assume $\varpi_{1}$, $\varpi_{2}$ are two modulus of continuity. We say $\varpi_{1}$ is to be not weaker than $\varpi_{2}$ if
\begin{equation*}
\varlimsup_{x\rightarrow 0^{+}}\frac{\varpi_{1}(x)}{\varpi_{2}(x)}<+\infty,
\end{equation*}
and denote it as $\varpi_{1}\leq \varpi_{2}$ $($or $\varpi_{2}\geq \varpi_{1})$.

\end{definition}
\begin{remark}\label{remark-1}
\begin{itemize}
\item[(a)]If the function $f$ is real analytic about $x$ on a bounded closed set, then $f$ is naturally continuously differentiable with $x$ of order one. Obviously, one has $|f(x)-f(y)|\leq c|x-y|$ for some $c>0$ independent of $x, y$, that is, there exists a modulus of continuity $\varpi_{1}(x)=x$ with
	\begin{equation*}
	|f(x)-f(y)|\leq c\varpi_{1}(|x-y|),\qquad \forall \ 0<|x-y|\leq 1.
\end{equation*}
\item[(b)]The classical $ \alpha $-H\"older case corresponds to modulus of continuity $ \varpi_{\rm H}^\alpha(x) = x^\alpha $ with some $ 0<\alpha<1 $, and the Logarithmic Lipschitz case $\varpi_{\rm {LL}}(x)\sim (-\log x)^{-1}$  as $ x \to 0^+ $ is weaker than arbitrary $ \alpha $-H\"{o}lder continuity, that is, $ \varpi_{\rm H}^\alpha(x) \leq \varpi_{\rm {LL}}(x) $. Both of them characterise regularity weaker than that of Lipschitz.
\end{itemize}
\end{remark}

It needs to be pointed out that, the regularity of the majority of functions is indeed very weak from the perspective of Baire category,  such as nowhere differentiable.  In fact, the nowhere differentiable regularity could be even worse. More precisely, we present the following theorem constructing very weak continuity, one can see details from Theorem 7.2 in \cite{TD}.

\begin{theorem}\label{nonowh}
	Given a modulus of continuity $ {\varpi _1} $, there exists a  function $ f $ (actually, a family) 	on $ \mathbb{R} $ and a modulus of continuity $ {\varpi _2} \geq  {\varpi _1}$, such that $ f $ is $ {\varpi _2} $ continuous, but nowhere $ {\varpi _1} $ continuous.
\end{theorem}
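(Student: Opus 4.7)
The plan is to realise $f$ explicitly as a lacunary series
\begin{equation*}
f(x) = \sum_{k=1}^{\infty} c_k \, \psi(\lambda_k x),
\end{equation*}
where $\psi$ is a fixed smooth $1$-periodic function, e.g.\ $\psi(t) = (2\pi)^{-1}\sin(2\pi t)$, and the sequences $c_k>0$, $\lambda_k>0$ are tuned to the given $\varpi_1$. Concretely, I would pick a strongly lacunary frequency sequence $\lambda_k \uparrow \infty$ (say $\lambda_{k+1} \geq 2^k \lambda_k$) together with a sequence $\omega_k \to +\infty$ growing as slowly as we please, and set $c_k := \omega_k\,\varpi_1(1/\lambda_k)$. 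The $k$-th harmonic then lives at spatial scale $1/\lambda_k$ with amplitude exceeding $\varpi_1(1/\lambda_k)$ by the factor $\omega_k$: it is this excess that prevents $\varpi_1$-continuity pointwise, while slow growth of $\omega_k$ keeps the series summable to a continuous function.

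Next I would define $\varpi_2(\delta)$ from the natural envelope obtained by splitting the series at frequency $1/\delta$. For $|x-y|\leq \delta$ a standard estimate gives
\begin{equation*}
|f(x)-f(y)| \leq \|\psi'\|_\infty\, \delta \sum_{\lambda_k \leq 1/\delta} c_k \lambda_k + 2\|\psi\|_\infty\sum_{\lambda_k > 1/\delta} c_k,
\end{equation*}
and I would take $\varpi_2(\delta)$ to be this right-hand side, regularised upwards to a strictly monotone continuous function. The growth bound $\varlimsup_{\delta\to 0^+}\delta/\varpi_2(\delta)<\infty$ required by Definition~\ref{def-1} holds because $\varpi_1(x) \gtrsim x$ forces $c_k \gtrsim 1/\lambda_k$, so that $\varpi_2$ is at least linear near $0$. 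The comparison $\varpi_2 \geq \varpi_1$ in the sense of Definition~\ref{DE2.3} then follows by testing at $\delta = 1/\lambda_k$: the $k$-th term alone contributes $c_k = \omega_k\,\varpi_1(1/\lambda_k)$, so $\varpi_1(\delta)/\varpi_2(\delta) \to 0$ along this subsequence, which is far stronger than the required boundedness.

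The nowhere-$\varpi_1$-continuity is the heart of the argument. Fix any $x_0 \in \mathbb{R}$ and any constant $C>0$. Using the periodicity of $\psi$, for every sufficiently large $k$ I pick $y_k$ with $|x_0-y_k| \sim 1/\lambda_k$ and $|\psi(\lambda_k x_0)-\psi(\lambda_k y_k)| \geq c_\psi > 0$. The $k$-th term then contributes at least $c_\psi c_k$ to $|f(x_0)-f(y_k)|$, whereas the lower-frequency terms contribute $O\bigl(\sum_{j<k} c_j \lambda_j/\lambda_k\bigr)$ and the higher-frequency terms $O\bigl(\sum_{j>k} c_j\bigr)$, both negligible compared to $c_k$ thanks to the strong lacunarity of $\lambda_k$ and the slow growth of $\omega_k$. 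Consequently $|f(x_0)-f(y_k)|/\varpi_1(|x_0-y_k|) \gtrsim \omega_k \to +\infty$, so $f$ fails $\varpi_1$-continuity at every $x_0$.

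The main obstacle is the joint calibration of the three objects $\lambda_k$, $\omega_k$ and the regularised $\varpi_2$: the lacunarity must be strong enough to absorb the cross-scale errors on both sides of the estimate, while $\omega_k \to +\infty$ must be slow enough that the resulting $\varpi_2$ remains a genuine modulus of continuity (in particular satisfies the growth constraint of Definition~\ref{def-1}) and also that the tails $\sum_{j>k}c_j$ stay negligible compared to $c_k$. Once this bookkeeping is in place, which is carried out in detail in \cite[Theorem 7.2]{TD}, the stated conclusion follows.
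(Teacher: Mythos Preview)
The paper does not actually prove this theorem; it simply cites \cite[Theorem~7.2]{TD} and adds the remark that ``these kind of functions are usually constructed by trigonometric series admitting self-similarity, similar to Weierstrass function and so on.'' Your proposal follows precisely this Weierstrass-type lacunary construction and defers to the same reference for the delicate calibration, so your approach coincides with what the paper points to.

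One small caution on your sketch: showing $\varpi_1(\delta)/\varpi_2(\delta)\to 0$ along the subsequence $\delta=1/\lambda_k$ does \emph{not} by itself yield $\varlimsup_{\delta\to 0^+}\varpi_1(\delta)/\varpi_2(\delta)<+\infty$, since for $\delta$ strictly between $1/\lambda_{k+1}$ and $1/\lambda_k$ the ratio $\varpi_1(1/\lambda_k)/\varpi_1(1/\lambda_{k+1})$ may be enormous under strong lacunarity and is not automatically absorbed by $\omega_{k+1}$. The ``regularised upwards'' step you mention is where this must be handled --- one interpolates $\varpi_2$ between the scales $1/\lambda_k$ so that it dominates $\varpi_1$ everywhere, not just at the sample points --- and this is indeed part of the bookkeeping carried out in \cite{TD}.
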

\begin{remark}
	These kind of functions are usually constructed by trigonometric series admitting self-similarity, similar to Weierstrass function and so on.
\end{remark}
\begin{remark}\label{noholder}
	As a direct application, we can construct a family of functions, which are nowhere Lipschitz or even nowhere H\"older continuous.
\end{remark}

Finally, in order to specify the norm based on the modulus of continuity, we need to give the domains of the variables in detail.  Throughout this paper, let
\begin{align*}
D(h)&:=\{\theta\in\mathbb{C}^{n}: {\rm Re}\ \theta\in\mathbb{T}^{n},\ |{\rm Im}\ \theta|\leq h\},\\
G(s)&:=\{r\in \mathbb{C}^{n}:{\rm Re}\ r\in E, \ | {\rm Im}\ r|\leq s\}
\end{align*}
be the complex neighborhoods of $\mathbb{T}^{n}$ and $E$ for given $h,s>0$. For each vector $r=(r_{1},\cdots,r_{n})\in\mathbb{R}^{n}$, we denote by $|r|$ the $l^{1}$-norm of $r$:
\begin{equation*}
|r|=|r_{1}|+\cdots+|r_{n}|.
\end{equation*}
Also, for ease of notation, we write
$\mathcal{D}(h,s):=D(h)\times G(s)$. Next we introduce the  norm defined as follows.
\begin{definition}
For the perturbation function $f(\theta,r,\e)$, which is real analytic about $(\theta,r)\in\mathcal{D}(h,s)$, one can also claim that $f(\theta,r,\e)$ is $\varpi_{1}(x)=x$ continuous about $r$ due to Remark \ref{remark-1}, define its norm as follows
\begin{equation*}
||f||_{\mathcal{D}(h,s)}:=|f|_{\mathcal{D}(h,s)}+[f]_{\varpi_{1}},
\end{equation*}
where
\begin{equation*}
|f|_{\mathcal{D}(h,s)}=\sup_{(\theta,r)\in \mathcal{D}(h,s)}|f(\theta,r)|,
\end{equation*}
and
\begin{equation*}
[f]_{\varpi_{1}}=\sup_{\theta\in D(h)}\sup_{\substack{r',r''\in G(s)\\0<|r'-r''|\leq 1}}\frac{|f(\theta,r',\e)-f(\theta,r'',\e)|}{\varpi_{1}(|r'-r''|)}.
\end{equation*}
\begin{remark}
As to weaker continuity described by certain modulus of continuity $ \varpi \geq \varpi_1$, one only needs to change $ \varpi_1 $ to $ \varpi $  in the norm accordingly.
\end{remark}

\end{definition}
\section{Main results}\label{SEC3}
This section is divided into two parts, namely stating our main KAM results as well as giving some further discussions.
\subsection{Frequency-preserving KAM}
\setcounter{equation}{0}
Before starting,  let us make some  preparations. Following Remark \ref{remark-1}, there exists a modulus of continuity $\varpi_{1}(x)=x$ such that $f$ and $g$ are automatically $\varpi_{1}$ continuous about $r$. Besides, the following assumptions are crucial to our KAM theorems.\\
 \begin{itemize}
 \item[{\rm (A1)}] Let $p\in \mathbb{R}^{n}$ be given in advance and denote by $E^{\circ}$ the interior of $E$. Assume that
 \begin{equation}\label{A1}
 \deg\left(\omega(\cdot),E^{\circ},p\right)\neq 0.
 \end{equation}
 \item[{\rm (A2)}] Assume that $\omega(r_{*})=p$ with some $r_{*}\in E^{\circ}$ by \eqref{A1}, and
 \begin{equation*}
 |\langle k,\omega(r_{*})\rangle -k_{0}|\geq \frac{\gamma}{|k|^{\tau}},\qquad \forall\ k\in \mathbb{Z}^{n}\backslash \{0\},\ k_{0}\in\mathbb{Z}, \ |k_{0}|\leq M_{0}|k|,
 \end{equation*} where $\gamma>0$, $\tau>n-1$ is fixed, and $M_{0}$ is assumed to be the upper bound of $|\omega|$ on $E$.\\
\item[{\rm (A3)}] Assume that $B(r_{*},\delta)\subset E^{\circ}$ with $ \delta>0 $ is a neighborhood of $r_{*}$. There exists a modulus of continuity $\varpi_{2}$ %with $\varpi_{1}\leq \varpi_{2}$
 such that
\begin{equation*}
|\omega(r')-\omega(r'')|\geq \varpi_{2}(|r'-r''|),\qquad r',r''\in B(r_{*},\delta), \quad 0<|r'-r''|\leq 1.
\end{equation*}
\end{itemize}

Via these assumptions, we are now in a position to present the following KAM theorem for twist mapping with intersection property, \textit{which is the first frequency-preserving result on Moser's theorem to the best of our knowledge.}

 \begin{theorem}\label{theorem-1} Consider mapping \eqref{equation-1} with intersection property. Assume that the perturbations are real analytic about $(\theta,r)$, and the frequency $\omega$ is  continuous about $r$. Moreover, ${\rm (A1)}$-${\rm (A3)}$ hold. Then there exists a sufficiently small $\e_{0}$, a transformation $\mathscr{W}$ when $0<\e<\e_{0}$. The transformation $\mathscr{W}$ is a conjugation from $\mathscr{F}$ to $\hat{\mathscr{F}}$, and $\hat{\mathscr{F}}(\theta,r)=(\theta+\omega(r_{*}),r-\tilde{r})$ is the integrable rotation on $\mathbb{T}^{n}\times E$ with frequency $\omega(r_{*} )=p$, where $\tilde{r}$ is the translation about the action  $r$ resulting from the transformation $\mathscr{W}$, and the constant $\tilde{r} \rightarrow 0$ as $\e\rightarrow0$. That is, the following  holds:
 \begin{equation*}
 \mathscr{W}\circ\hat{\mathscr{F}}=\mathscr{F}\circ\mathscr{W} .
 \end{equation*}
 \end{theorem}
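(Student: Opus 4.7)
The plan is to prove Theorem \ref{theorem-1} via a modified KAM Newton iteration in which the classical strategy of shrinking the frequency set is replaced, at every step, by a parameter translation designed to keep the prescribed frequency $p=\omega(r_*)$ fixed. At the $\nu$-th stage one works with a mapping of the form
\[
\mathscr{F}_\nu:\ \theta^1=\theta+\omega(r)+\e_\nu f_\nu(\theta,r),\quad r^1=r+\e_\nu g_\nu(\theta,r),
\]
real analytic in $(\theta,r)$ on $\mathcal{D}(h_\nu,s_\nu)$ and inheriting the intersection property, and one seeks a near-identity change of variables $\mathscr{W}_\nu=\mathrm{id}+(u_\nu,v_\nu)$ together with a translation $\tilde r_\nu$ of the action variable such that the conjugate $\mathscr{W}_\nu^{-1}\circ\mathscr{F}_\nu\circ\mathscr{W}_\nu$ again has this form on $\mathcal{D}(h_{\nu+1},s_{\nu+1})$, with quadratically smaller perturbation $\e_{\nu+1}\sim\e_\nu^2$ and with the frequency still equal to $p$ at the new center.

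Linearizing the conjugation equation around the torus $\{r=r_*\}$ produces the homological equations
\[
u_\nu(\theta+\omega(r_*),r_*)-u_\nu(\theta,r_*)=f_\nu(\theta,r_*),\quad v_\nu(\theta+\omega(r_*),r_*)-v_\nu(\theta,r_*)=g_\nu(\theta,r_*),
\]
modulo averaged and higher-order terms. The $\theta$-average of $g_\nu$ along the torus is shown to vanish in the standard Moser fashion using the intersection property, making the second equation solvable; the $\theta$-average of $f_\nu$ is absorbed into the translation $\tilde r_\nu$. Cutting off Fourier series at a level $K_\nu$ and inverting small divisors via (A2), whose extra constraint $|k_0|\leq M_0|k|$ is operative precisely because $|\langle k,\omega(r)\rangle|\leq M_0|k|$ on $E$, together with Cauchy estimates on the analytic norms, then yields the required quadratic bound on $f_{\nu+1},g_{\nu+1}$. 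A delicate by-product, flagged in the introduction, is that the intersection property is not automatically preserved by the conjugation; I would restore it by composing $\mathscr{W}_\nu$ with an auxiliary exact conjugation, exploiting that the failure is itself $O(\e_\nu)$ small.

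The step I expect to be the main obstacle is the frequency-preserving translation. After one iteration the $\theta$-average of $f_\nu$ produces a drift of the effective frequency at the old center, so the naive $r_*$ no longer satisfies $\omega(r_*)=p$ and one must locate a corrected $r_*^{(\nu+1)}\in E^\circ$. Because only continuity of $\omega$ is available, the implicit function theorem is unavailable, and existence is instead extracted from (A1): by homotopy invariance of the Brouwer degree, for perturbations below a quantitative threshold the perturbed frequency map still attains the value $p$ somewhere in $E^\circ$. Quantitative control of $|r_*^{(\nu+1)}-r_*^{(\nu)}|$, needed both to keep the center inside $E^\circ$ throughout the iteration and to make $\sum_\nu\tilde r_\nu$ converge, comes from the weak convexity (A3): from
\[
\varpi_2\bigl(|r_*^{(\nu+1)}-r_*^{(\nu)}|\bigr)\leq |\omega(r_*^{(\nu+1)})-\omega(r_*^{(\nu)})|=O(\e_\nu)
\]
one deduces $|r_*^{(\nu+1)}-r_*^{(\nu)}|\leq \varpi_2^{-1}(O(\e_\nu))$, which is summable once $\e_\nu$ decays sufficiently fast.

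Convergence of the infinite composition $\mathscr{W}=\lim_{\nu\to\infty}\mathscr{W}_0\circ\cdots\circ\mathscr{W}_\nu$ on a nonempty limit domain, together with $\tilde r=\sum_{\nu\geq 0}\tilde r_\nu\to 0$ as $\e\to 0$, then follows by the standard KAM bookkeeping once the sequences $h_\nu\downarrow h_\infty>0$, $s_\nu\downarrow 0$, $K_\nu\uparrow\infty$ and $\e_\nu$ are chosen compatibly. The resulting limit mapping reduces on the invariant torus to $\hat{\mathscr{F}}(\theta,r)=(\theta+\omega(r_*),r-\tilde r)$, which establishes the conjugacy $\mathscr{W}\circ\hat{\mathscr{F}}=\mathscr{F}\circ\mathscr{W}$.
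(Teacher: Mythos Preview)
Your proposal follows essentially the same route as the paper: a KAM iteration in which each step consists of a near-identity conjugation solving truncated homological equations via the Diophantine condition (A2), followed by a parameter translation chosen through the degree condition (A1) to restore the frequency $p$, with (A3) supplying the Cauchy estimate on the sequence of centers, and with the intersection property reinstated at each step by passing to the full conjugate $\mathscr{V}_{\nu+1}^{-1}\circ\bar{\mathscr{F}}_{\nu+1}\circ\mathscr{V}_{\nu+1}$.

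Two points of your sketch are slightly inaccurate and worth correcting before you write out details. First, the intersection property does \emph{not} force the $\theta$-average $g_{0,\nu}$ to vanish; it only guarantees, for each $r$, a point $\theta^0$ with $\bar g_{\nu+1}(\theta^0,r)=0$. The paper therefore leaves $g_{0,\nu}$ inside the new remainder and uses this zero to bound $\|\bar g_{\nu+1}\|$ by its oscillation in $\theta$, which in turn is controlled by $\|\bar g_{\nu+1}-g_{0,\nu}\|$ and hence is quadratically small. Second, your displayed Cauchy estimate
\[
\varpi_2\bigl(|r_*^{(\nu+1)}-r_*^{(\nu)}|\bigr)\le|\omega(r_*^{(\nu+1)})-\omega(r_*^{(\nu)})|=O(\e_\nu)
\]
is oversimplified: the frequency equation at step $\nu$ reads $\omega(\hat r_{\nu+1})+\sum_{i=0}^{\nu}f_{0,i}(\hat r_{\nu+1})=p$, so subtracting the step-$\nu$ and step-$(\nu-1)$ equations produces, besides $f_{0,\nu}$, the differences $\sum_{i<\nu}\bigl(f_{0,i}(\hat r_\nu)-f_{0,i}(\hat r_{\nu+1})\bigr)$. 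These are bounded by $c\,\varpi_1(|\hat r_{\nu+1}-\hat r_\nu|)\sum_{i<\nu}\mu_i$ and must be absorbed into the left-hand side, which is exactly where the relation $\varpi_1\le\varpi_2$ (a consequence of Definition~\ref{def-1} together with (A3)) is used. With these two corrections your outline matches the paper's proof.
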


 When $n=1$, consider the area-preserving mapping of the form \eqref{equation-1}, which obviously satisfies the intersection property. Correspondingly, we obtain Moser's invariant curve theorem with frequency-preserving as stated in the following corollary.
 \begin{corollary}\label{cor-1}
Consider mapping \eqref{equation-1} for $n=1$. Assume that the perturbations are real analytic about $(\theta,r)$, and the frequency $\omega$ is continuous and strictly monotonic concerning $r$. Moreover, $({\rm A2})$ and $({\rm A3})$ hold. Then there exists a sufficiently small $\e_{0}$, a transformation $\mathscr{W}$ when $0<\e<\e_{0}$. The transformation $\mathscr{W}$ is a conjugation from $\mathscr{F}$ to $\hat{\mathscr{F}}$, and $\hat{\mathscr{F}}(\theta,r)=(\theta+\omega(r_{*}),r-\tilde{r})$ is the integrable rotation on $\mathbb{T}\times E$ with frequency $\omega(r_{*})=p$ for $p\in \omega(E^\circ)^\circ$ fixed, where $\tilde{r}$ is the translation about $r$ resulting from the transformation $\mathscr{W}$, and the constant $\tilde{r}\rightarrow 0$ as $\e\rightarrow 0$. That is, the following holds:
 \begin{equation*}
 \mathscr{W}\circ\hat{\mathscr{F}}=\mathscr{F}\circ\mathscr{W} .
 \end{equation*}
 \end{corollary}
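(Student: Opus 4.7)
The plan is to deduce Corollary \ref{cor-1} as a direct specialization of Theorem \ref{theorem-1} to $n=1$, by checking that the two hypothesis changes (dropping the topological degree condition (A1) in favour of strict monotonicity, and dropping the intersection property in favour of area preservation) are in fact equivalent to the original assumptions in this dimension. No new KAM machinery is required; the work is entirely in verifying hypotheses.

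First I would verify that any area-preserving mapping of $\mathbb{T}\times E\subset \mathbb{T}\times \mathbb{R}$ of the form \eqref{equation-1} automatically enjoys the intersection property required in Theorem \ref{theorem-1}. Let $C$ be any essential closed curve in $\mathbb{T}\times E$ close to the unperturbed invariant curve $\{r=r_*\}$, written as a graph $r=\phi(\theta)$. If $\mathscr{F}(C)$ were disjoint from $C$, it would be the graph of a function $\tilde\phi\neq \phi$ lying strictly above or strictly below $\phi$, so that the signed area $\int_{\mathbb{T}}(\tilde\phi-\phi)\,d\theta$ between them would be nonzero. But area preservation forces the regions bounded below $C$ and below $\mathscr{F}(C)$ to have equal area, a contradiction. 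This is the classical Moser argument and justifies omitting the intersection property as a separate hypothesis.

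Second I would verify (A1). Since $\omega$ is continuous and strictly monotonic on the closed bounded interval $E\subset \mathbb{R}$, the restriction $\omega|_{E^{\circ}}$ is a homeomorphism onto the open interval $\omega(E^{\circ})$, so $\omega(E^{\circ})^{\circ}=\omega(E^{\circ})$. For each $p\in \omega(E^{\circ})^{\circ}$, the equation $\omega(r)=p$ has a unique solution $r_*\in E^{\circ}$, and the Brouwer degree on the interval satisfies
\[
\deg(\omega(\cdot),E^{\circ},p)=\operatorname{sgn}(\omega'\text{-direction})=\pm 1\neq 0,
\]
simply because a continuous strictly monotone function has degree $\pm 1$ at any interior image point. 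Thus (A1) holds and the unique $r_*$ produced here coincides with the one used by (A2) and (A3), which are assumed verbatim in the corollary.

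With all hypotheses of Theorem \ref{theorem-1} verified for $n=1$, I would then quote that theorem to obtain a sufficiently small $\e_0$ and a transformation $\mathscr{W}$ with $\mathscr{W}\circ \hat{\mathscr{F}}=\mathscr{F}\circ \mathscr{W}$, where $\hat{\mathscr{F}}(\theta,r)=(\theta+\omega(r_*),r-\tilde r)$ and $\tilde r\to 0$ as $\e\to 0^{+}$. The only conceptual point is that in dimension one the topological degree hypothesis and strict monotonicity are equivalent, so there is no real obstacle; if anything, the mildest subtlety is writing down carefully the classical area argument for the intersection property, but this is well documented in Moser's original work and requires no new ideas.
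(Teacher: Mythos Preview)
Your proposal is correct and follows essentially the same route as the paper: verify that the hypotheses of Theorem \ref{theorem-1} are met in dimension one and then quote that theorem. The paper dispatches the intersection property in a single sentence before the corollary (``area-preserving\ldots obviously satisfies the intersection property''), while you spell out the classical signed-area argument; and for (A1) the paper invokes Nagumo's theorem (homeomorphisms have degree $\pm 1$) rather than computing the degree directly from monotonicity as you do, but in one dimension these amount to the same observation.
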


Besides concentrating on twist mappings with action-angular variables, Herman \cite{He,H1} first considered the smooth mappings that contain only angular variables. It inspires us to investigate  the perturbed mappings on $ \mathbb{T}^n $ as well. We therefore consider the following mapping $\mathscr{F}:\mathbb{T}^{n}\times \Lambda\rightarrow \mathbb{T}^{n}$ defined by
\begin{equation}\label{equation-3.1}
\theta^{1}=\theta+\omega(\xi)+\e f(\theta,\xi,\e),
\end{equation}
where $\theta\in\mathbb{T}^{n}=\mathbb{R}^{n}/ 2\pi\mathbb{Z}^{n}$, $\xi\in\Lambda\subset\mathbb{R}^{n}$ is a parameter, $\Lambda$ is a connected closed bounded domain with interior points,  and $ \varepsilon $ is a sufficiently small scalar. Assume that the perturbation $f(\theta,\xi,\e)$ is real analytic about $\theta$, continuous about the parameter $\xi$, and the frequency $\omega(\xi)$ is continuous about $\xi$. We will prove that mapping \eqref{equation-3.1} has an invariant torus with the frequency unchanged during the iteration process. Moreover, the assumptions ${\rm (B1)}$-${\rm (B3)}$ corresponding to ${\rm (A1)}$-${\rm (A3)}$ are respectively
\begin{itemize}
\item[{\rm (B1)}] Let $q\in \mathbb{R}^{n}$ be given in advance and denote by $\Lambda^{\circ}$ the interior of the parameter set $\Lambda$. Assume that
\begin{equation}\label{eq-3.3}
\deg\left(\omega(\cdot),\Lambda^{\circ},q\right)\neq 0.
\end{equation}
\item[{\rm (B2)}] Assume that $\omega(\xi_{*})=q$ with some $\xi_*\in \Lambda^{\circ}$ by \eqref{eq-3.3}, and
\begin{equation*}
| \langle k,\omega(\xi_{*})\rangle-k_{0}|\geq \frac{\gamma}{| k|^{\tau}}, \qquad \forall k\in \mathbb{Z}^{n}\backslash\{0\},\ k_{0}\in\mathbb{Z}, \  |k_{0}|\leq M_{1}| k|,
\end{equation*}
where $\tau>n-1$, $\gamma>0$ and $M_{1}$ is assumed to be the upper bound of $|\omega|$ on $\Lambda$.\\
\item[{\rm (B3)}] Assume $B(\xi_{*},\delta)\subset \Lambda^{\circ}$ with $\delta>0$ is the neighborhood of $\xi_{*}
$. There exists a modulus of continuity $\varpi_{2}$ with $\varpi_{1}\leq \varpi_{2}$ such that
\begin{equation*}
| \omega(\xi')-\omega(\xi'')|\geq \varpi_{2}(| \xi'-\xi''|),\qquad  \xi',\xi''\in B(\xi_{*},\delta),\qquad 0<|\xi'-\xi''|\leq1.
\end{equation*}
\end{itemize}
%Similar to Theorem \ref{theorem-1}, we have the following translation theorem.

Similar to Theorem \ref{theorem-1}, we have the following theorem on   $ \mathbb{T}^n $, {\textit{where the parameter-dependence for the perturbations is shown to be only continuous.}} This result is new, and unexpected, thanks to the parameter  translation technique introduced in \cite{DL,TD} as we forego.
 \begin{theorem}\label{theorem-2}
 Consider mapping \eqref{equation-3.1}. Assume that the perturbation $f(\theta,\xi,\e)$ is real analytic about $\theta$ on $D(h)$, $\varpi_{1}$ continuous about $\xi$ on $\Lambda$, and $\omega$ is  continuous about $\xi$ on $\Lambda$. Moreover, ${\rm (B1)}$-${\rm (B3)}$ hold. Then there exists a sufficiently small $\e_{0}$, a transformation $\mathscr{U}$ when $0<\e<\e_{0}$. The transformation $ \mathscr{U} $ is a conjugation from $ \mathscr{F} $ to $ \hat{\mathscr{F}} $, and $\hat{\mathscr{F}}(\theta)=\theta+\omega(\xi_{*})$ is the integrable rotation on $\mathbb{T}^{n}\times \Lambda$ with frequency $\omega(\xi_{*})=q$. That is, the following holds:
 \begin{equation}\label{UUUU}
 \mathscr{U}\circ\hat{\mathscr{F}}=\mathscr{F}\circ\mathscr{U}.
 \end{equation}
 \end{theorem}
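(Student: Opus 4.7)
The plan is to run a standard KAM iteration for the map \eqref{equation-3.1} combined with the parameter-translation technique of \cite{DL,TD}, so that the natural frequency drift during the iteration is absorbed into a motion of the parameter rather than of the rotation vector itself.

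At the $\nu$-th step I would maintain the normal form
\[
\mathscr{F}_\nu(\theta)=\theta+\omega_\nu(\xi)+\e_\nu f_\nu(\theta,\xi,\e),\qquad \theta\in D(h_\nu),
\]
together with a parameter value $\xi_\nu\in\Lambda^\circ$ satisfying $\omega_\nu(\xi_\nu)=q$, along sequences $h_\nu\downarrow h_*>0$ and $\e_\nu\downarrow 0$ of super-exponential type. The starting data $\mathscr{F}_0=\mathscr{F}$, $\omega_0=\omega$, $\xi_0=\xi_*$ verify this hypothesis by (B1)--(B2). The KAM step seeks a real-analytic near-identity change $u_\nu(\theta)=\theta+v_\nu(\theta)$ for which $\mathscr{F}_{\nu+1}:=u_\nu^{-1}\circ\mathscr{F}_\nu\circ u_\nu$ has error of order $\e_\nu^{4/3}$. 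Linearising $u_\nu\circ\mathscr{F}_{\nu+1}=\mathscr{F}_\nu\circ u_\nu$ about $\theta\mapsto\theta+q$ and evaluating at $\xi=\xi_\nu$, the unknown $v_\nu$ is determined by the homological equation
\[
v_\nu(\theta+q)-v_\nu(\theta)=\e_\nu\bigl(f_\nu(\theta,\xi_\nu,\e)-[f_\nu](\xi_\nu,\e)\bigr),
\]
where $[\,\cdot\,]$ denotes the $\theta$-average on $\mathbb{T}^n$. The Diophantine hypothesis (B2) on $q$ delivers a unique zero-mean analytic solution on $D(h_\nu-\rho_\nu)$ with the classical loss in $\gamma$ and $\rho_\nu$; the only obstruction is the mean $\e_\nu[f_\nu](\xi,\e)$, which I absorb into the frequency by declaring $\omega_{\nu+1}(\xi):=\omega_\nu(\xi)+\e_\nu[f_\nu](\xi,\e)$.

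To close the induction I then translate the parameter, looking for $\xi_{\nu+1}\in\Lambda^\circ$ with $\omega_{\nu+1}(\xi_{\nu+1})=q$. Existence follows from homotopy invariance of the Brouwer degree: since $\|\omega_{\nu+1}-\omega_\nu\|_{C^0(\Lambda)}=O(\e_\nu)$, the degree stays nonzero along the iteration, starting from (B1). Quantitative control uses (B3): from $\omega_{\nu+1}(\xi_{\nu+1})-\omega_{\nu+1}(\xi_\nu)=-\e_\nu[f_\nu](\xi_\nu,\e)$, combined with the lower bound on $\omega$ in $B(\xi_*,\delta)$ and the fact that the accumulated drift is $\varpi_1$-continuous with $\varpi_1\leq\varpi_2$, one obtains $|\xi_{\nu+1}-\xi_\nu|\leq\varpi_2^{-1}(C\e_\nu)$. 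Choosing $\e_\nu$ to decay rapidly, the series $\sum_\nu\varpi_2^{-1}(C\e_\nu)$ converges and confines $\{\xi_\nu\}$ to $B(\xi_*,\delta)$, whence $\xi_\nu\to\xi_\infty$. Standard Cauchy estimates then give convergence of the compositions $U_\nu=u_0\circ\cdots\circ u_\nu$ on $D(h_*)$ to a real-analytic $\mathscr{U}:\mathbb{T}^n\to\mathbb{T}^n$. Passing to the limit in $\mathscr{F}(\cdot,\xi_{\nu+1},\e)\circ U_\nu=U_\nu\circ\mathscr{F}_{\nu+1}$, using $\omega_\nu(\xi_\nu)\equiv q$ and $\e_\nu f_\nu\to 0$, yields \eqref{UUUU} with $\hat{\mathscr{F}}(\theta)=\theta+q$.

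The main obstacle is precisely this parameter-translation step. Because $\omega$ and the drifts $[f_\nu]$ are only continuous in $\xi$ (possibly nowhere H\"older, cf.\ Corollary \ref{COROMH}), no implicit function theorem is available for tracking $\xi_\nu$: existence must be supplied by the topological degree in (B1), while uniqueness and quantitative stability must come from the weak convexity (B3). Matching the upper $\varpi_1$-regularity of the drifts against the lower $\varpi_2$-bound on the frequency is delicate, and the decay rate of $\e_\nu$ must be chosen so that $\sum_\nu\varpi_2^{-1}(C\e_\nu)<\delta$ uniformly in $\nu$; verifying this balance, and thereby keeping the whole iterative parameter path inside the ball where (B3) holds, is the most non-routine quantitative point of the proof.
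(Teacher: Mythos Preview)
Your outline is correct and follows essentially the same route as the paper: a KAM step solving the difference homological equation $v(\theta+q)-v(\theta)=\text{(truncated oscillatory part)}$ via the Diophantine condition (B2), absorption of the mean value into an updated frequency, and then a parameter translation $\xi_\nu\mapsto\xi_{\nu+1}$ that restores $\omega_{\nu+1}(\xi_{\nu+1})=q$ by invoking the stability of the Brouwer degree from (B1) together with the weak-convexity bound (B3) to make $\{\xi_\nu\}$ Cauchy. The paper organises these ingredients exactly as you do (its Lemma~5.2 is your degree/convexity step), with only cosmetic differences in bookkeeping: it tracks a single small parameter $\mu_\nu=\mu_0^{(1+\rho)^\nu}$ rather than an $\e_\nu$, truncates Fourier series at order $K_{\nu+1}$ before solving the homological equation, and writes the cumulative frequency as $\omega(\xi)+\sum_{i\le\nu}f_{0,i}(\xi)$ instead of renaming it $\omega_{\nu+1}$.
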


The main difference between Theorems $\ref{theorem-1}$ and $\ref{theorem-2}$ is that the analyticity of the perturbation $f$ about $r$ can be used in Theorem $\ref{theorem-1}$ to ensure that $f$ is at least Lipschitz continuous about $r$, that is, there exists  a modulus of continuity $\varpi_{1}(x)=x$. In fact, it prohibits us from extending $r$ to complex strips in the KAM scheme if $f$ is assumed to be only continuous about $r$. However, for Theorem $\ref{theorem-2}$ we consider the case where there is no action variable $r$ but only parameter $\xi$. In this situation, the perturbation $f$ being continuous about $\xi$ is enough, and we will employ condition $({\rm B3})$ directly in the proof of frequency-preserving. Explicitly, the parameter-dependence for $ f $ could be very weak, such as the  arbitrary $ \alpha $-H\"{o}lder continuity $\varpi_{\rm H}^{\alpha}(x)=x^{\alpha}$ with any  $0<\alpha<1$, then $ \varpi_{2} $ in {\rm (B3)} being  the Logarithmic Lipschitz type $\varpi_{\rm {LL}}(x)\sim (-\log x)^{-1}$ as $ x\to 0^+ $ allows for Theorem \ref{theorem-2} due to Remark \ref{remark-1}. Actually, in view of Theorem \ref{nonowh}, we could deal with the case which  admits extremely weak regularity, at least nowhere differentiable. In order to show the wide applicability of Theorem \ref{theorem-2}, we directly give the following  corollary. 

\begin{corollary}\label{COROMH}
	Consider mapping \eqref{equation-3.1}, where the perturbation $f(\theta,\xi,\e)$ is assumed to be real analytic about $\theta$ on $D(h)$ and continuous about $\xi$ on $\Lambda$, but nowhere H\"older  continuous, the frequency mapping $\omega(\xi)$ is  continuous about $\xi$ on $\Lambda$. Besides, assume that ${\rm (B1)}$-${\rm (B3)}$ hold with certain $ \varpi_2 $ weaker than the modulus of continuity $ \varpi_1 $ automatically admitted by $ f $ with respect to $ \xi $. Then the conjugacy \eqref{UUUU} in Theorem \ref{theorem-2} holds as long as $ \varepsilon>0 $ is sufficiently small.
\end{corollary}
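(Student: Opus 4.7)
The plan is to present Corollary \ref{COROMH} as a direct specialization of Theorem \ref{theorem-2}, once one observes that the nowhere-H\"older regularity of $f$ is not in conflict with having \emph{some} modulus of continuity $\varpi_1$. So the proof is essentially a verification that the hypotheses of Theorem \ref{theorem-2} are in force, plus a sanity check that the exotic regularity class considered is nonempty.

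First I would recall, as discussed right after Definition \ref{def-1}, that any continuous function on a compact set in a finite-dimensional space automatically admits a modulus of continuity, produced by the pointwise supremum
\[
\varpi_{1}(x) := \sup_{\xi',\xi''\in\Lambda,\ 0<|\xi'-\xi''|\leq x}|f(\theta,\xi',\e)-f(\theta,\xi'',\e)|
\]
(taken uniformly in $\theta\in D(h)$ thanks to the analyticity in $\theta$ and compactness of $\Lambda$). Since $f$ is assumed to be nowhere H\"older, this $\varpi_1$ is necessarily weaker than every $\varpi_{\rm H}^\alpha(x)=x^\alpha$ in the sense of Definition \ref{DE2.3}, but it is nonetheless a bona fide modulus of continuity. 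To see that the class of admissible $f$ is not vacuous, I would invoke Theorem \ref{nonowh} together with Remark \ref{noholder}, which explicitly produce such functions as self-similar trigonometric series of Weierstrass type.

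Second, I would check the remaining hypotheses of Theorem \ref{theorem-2} one by one. Condition (B1) is imposed directly in the statement of the corollary, as is the existence of $\xi_*\in\Lambda^\circ$ with $\omega(\xi_*)=q$ together with the Diophantine estimate (B2). Assumption (B3) is also explicitly assumed, with a modulus $\varpi_2$ weaker than the $\varpi_1$ above; in the ordering of Definition \ref{DE2.3} this reads $\varpi_1 \leq \varpi_2$, which is exactly what Theorem \ref{theorem-2} demands. The continuity of $\omega$ and the $\varpi_1$-continuity of $f$ in $\xi$ are likewise part of the hypotheses.

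Third, with all the premises of Theorem \ref{theorem-2} verified, I would simply invoke it: for $\varepsilon>0$ sufficiently small (depending on $\varpi_1$, $\varpi_2$, the Diophantine constants $\gamma,\tau$, the degree condition, and the analyticity radii $h$), the theorem produces the conjugation $\mathscr{U}$ satisfying $\mathscr{U}\circ\hat{\mathscr{F}}=\mathscr{F}\circ\mathscr{U}$, which is precisely \eqref{UUUU}. There is no genuine technical obstacle here; the real work has already been done inside Theorem \ref{theorem-2}, whose frequency-preserving parameter-translation scheme never uses H\"older regularity of $f$ in $\xi$ and only requires the lower-bound modulus $\varpi_2$ to be \emph{at least as weak} as $\varpi_1$. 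The pedagogical point of the corollary is exactly that this leaves room for perturbations as irregular as one likes, up to mere continuity in $\xi$.
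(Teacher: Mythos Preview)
Your proposal is correct and matches the paper's intent: Corollary \ref{COROMH} is a direct specialization of Theorem \ref{theorem-2}, and the paper itself does not supply a proof beyond a one-line remark pointing to Example 7.5 in \cite{TD} for explicit constructions. Your write-up is in fact more thorough than the paper's, which treats the corollary as self-evident once Theorem \ref{theorem-2} is in hand.
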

\begin{remark}
	One could construct explicit applications  following Example 7.5 in \cite{TD} and we omit here for simplicity.
\end{remark}

\subsection{Further discussions}
Here we make some further discussions, including how to touch the  parameter without dimension limitation problem under our approach, as well as the importance of the weak convexity in preserving prescribed frequency.

\subsubsection{Parameter without dimension limitation}
The parameter without dimension limitation problem, as is known to all, is fundamental and difficult in KAM theory, especially using the classical digging frequency method. More precisely, both the angular variable and the action variable have dimensions of $n$, but the dimension of the parameter may be less than $n$. We will touch this question by employing our parameter translation technique. To this end, let us start with a discussion of the topological conditions {\rm (A1)} and {\rm (B1)}.

As can be seen in the proof, these conditions are proposed to ensure that the new parameters $\hat r_{\nu+1}$ and $\xi_{\nu+1}$ could be found in the next KAM step, while the prescribed frequencies remain unchanged due to frequency equations \eqref{equation-4.122} and  \eqref{eq-5.7}, see  \eqref{DISA1} and \eqref{DISA11} respectively. Here we have used the fact that the non-zero Brouwer degree does not change under small perturbations from the KAM iteration, and therefore the solvability of the frequency equations (\eqref{equation-4.122} and  \eqref{eq-5.7}) remains. Actually, the continuity of the frequency mapping $\omega(\xi)$ with respect to parameter $\xi$ is enough to guarantee this, see the new \textit{range  conditions} that can replace the topological conditions {\rm (A1)} and {\rm (B1)} below:

\begin{itemize}
	\item[{\rm (A1*)}] Let $ p=\omega(r_*)\in {\tilde \Omega } \subset \mathbb{R}^n $ satisfy the Diophantine condition in {\rm (A2)}, where $ {\tilde \Omega } $ is an open set of $ \omega(\Omega) $, and $\Omega\subset E \subset \mathbb{R}^n$ is open.
	\item[{\rm (B1*)}]  Let $ q=\omega(\xi_*)\in {\tilde \Omega } \subset \mathbb{R}^n$ satisfy the Diophantine condition in {\rm (B2)}, where $ {\tilde \Omega } $ is an open set of $ \omega(\Omega) $, and $\Omega\subset \Lambda \subset \mathbb{R}^m$ is open. Here $1 \leq m \leq +\infty$ could be different from $n$.
\end{itemize}
One notices that $ {\omega ^{ - 1}}( {\tilde \Omega } ) $ is also an open set due to the continuity of $ \omega $.  As a result, as long as the perturbations in KAM  are sufficiently small,  the solvability of the frequency equations \eqref{equation-4.122} and  \eqref{eq-5.7} do not change thanks to the continuity of $ \omega $ (note that we avoid the boundary of range), and the uniform convergence of $\{r_\nu\}$ and $\{\xi_\nu\}$ could still be obtained by Cauchy theorem through weak convexity  conditions {\rm (A3)} and {\rm (B3)}. Besides, the Brouwer degree requires that the domain of definition and range of mapping should be of the same dimension, while the range condition {\rm (A1*)} and {\rm (B1*)} removes this limitation. Consequently, we directly give the following conclusion.

\begin{theorem}
	Replace {\rm (A1)} and {\rm (B1)} with {\rm (A1*)} and {\rm (B1*)} respectively, leaving the other assumptions unchanged. Then the frequency-preserving KAM persistence in Theorem \ref{theorem-1}, Corollary \ref{cor-1}, Theorem \ref{theorem-2} and Corollary \ref{COROMH} is still allowed. Especially, for Theorem  \ref{theorem-2} and Corollary \ref{COROMH} related to  perturbed mapping with parameter, the dimension of  parameter could be different from that for angular variable.
\end{theorem}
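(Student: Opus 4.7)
The plan is to revisit the KAM iterations that prove Theorem~\ref{theorem-1} and Theorem~\ref{theorem-2} and to verify that every appeal to the topological degree conditions (A1), (B1) can be rerouted through the range conditions (A1*), (B1*) combined with the weak convexity assumptions (A3), (B3). First I would isolate the sole role played by the Brouwer degree in the original argument: at each KAM step $\nu$ the non-zero degree is used only to guarantee solvability of a frequency equation of the schematic form \eqref{equation-4.122} (respectively \eqref{eq-5.7}), which determines the translation $\hat r_{\nu+1}$ (respectively $\xi_{\nu+1}$) that absorbs the drift and keeps the prescribed frequency $p$ (respectively $q$) unchanged; the degree is then propagated from step to step by its stability under small perturbations.

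Next I would replace this mechanism by a direct openness-plus-continuity argument. Since $p=\omega(r_*)$ lies in the open set $\tilde\Omega\subset\omega(\Omega)$, continuity of $\omega$ implies that $\omega^{-1}(\tilde\Omega)$ is open and contains $r_*$. At step $\nu$ the frequency equation takes the schematic form $\omega(r)+e_\nu(r)=p$, where the drift $e_\nu$ is produced by previous KAM corrections and has supremum norm that is $O(\varepsilon)$-small, hence arbitrarily small once $\varepsilon_0$ is chosen small enough. Because $p$ is an interior point of the range $\tilde\Omega$, the perturbed map $\omega+e_\nu$ still attains $p$ on a neighborhood of $r_*$ by a purely continuity/compactness argument; crucially, this step never invokes equality of the dimensions of domain and codomain, which is exactly the observation that removes the Brouwer-degree dimensional constraint and yields the freedom $1\leq m\leq +\infty$ for the parameter in (B1*).

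Then I would pin down a specific preimage and prove convergence of $\{r_\nu\}$ (respectively $\{\xi_\nu\}$). This is where (A3) / (B3) enters: the weak convexity $|\omega(r')-\omega(r'')|\geq\varpi_2(|r'-r''|)$ on $B(r_*,\delta)$ immediately gives local injectivity of $\omega$ near $r_*$, so the preimage chosen there is unique, and from
\[
\varpi_2(|r_{\nu+1}-r_\nu|)\leq |\omega(r_{\nu+1})-\omega(r_\nu)|\leq |e_{\nu+1}(r_{\nu+1})-e_\nu(r_\nu)|
\]
together with the geometric decay of $\|e_\nu\|$ intrinsic to the super-convergent KAM scheme, a standard Cauchy argument yields a unique limit $r_\infty\in B(r_*,\delta)$, identical to what is obtained in the original proofs of Theorems~\ref{theorem-1} and~\ref{theorem-2}.

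The main obstacle I anticipate is quantitative bookkeeping: one must verify that the smallness threshold $\varepsilon_0$ inherited from the original theorems remains compatible with the new setting, meaning that $e_\nu$ stays small enough at every step to keep the chosen preimages of $p$ simultaneously inside the open neighborhood $\omega^{-1}(\tilde\Omega)$ and inside the ball $B(r_*,\delta)$ on which weak convexity is available. Once this is confirmed, the remainder of the proof --- the conjugation estimates, the construction of the transformations $\mathscr W$ and $\mathscr U$, and the preservation of the intersection property required for Theorem~\ref{theorem-1} --- is left untouched, so the conclusions of Theorem~\ref{theorem-1}, Corollary~\ref{cor-1}, Theorem~\ref{theorem-2}, and Corollary~\ref{COROMH} transfer verbatim, with the extra dimensional flexibility being a direct corollary of having removed the degree-theoretic bottleneck.
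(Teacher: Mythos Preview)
Your proposal is correct and mirrors the paper's own justification essentially line for line: the paper likewise observes that the Brouwer degree is invoked solely to solve the frequency equations \eqref{equation-4.122} and \eqref{eq-5.7}, that continuity of $\omega$ makes $\omega^{-1}(\tilde\Omega)$ open so that solvability persists under the small KAM perturbations because $p$ (resp.\ $q$) avoids the boundary of the range, and that the Cauchy property of $\{\hat r_\nu\}$ (resp.\ $\{\xi_\nu\}$) is recovered from the weak convexity (A3)/(B3) exactly as before. Your added remark that (A3)/(B3) yields local injectivity of $\omega$ is a mild sharpening the paper does not state explicitly, but otherwise the route is identical.
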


\subsubsection{Weak convexity}
We end this section by making some comments on our weak convexity  conditions {\rm (A3)} and {\rm (B3)}. Such conditions were firstly proposed in \cite{DL,TD} to keep the prescribed frequency in Hamiltonian systems unchanged, \textit{and were shown to be unremovable in the sense of frequency-preserving, see the counterexample constructed in \cite{DL}.} Although the KAM theorems of the mapping form are somewhat  different from the former, the weak convexity conditions still ensure  frequency-preserving KAM persistence, as shown in Theorems \ref{theorem-1} and \ref{theorem-2}.

\section{KAM steps}\label{SEC4}
 \setcounter{equation}{0}
 In this section, we will show details of one cycle of KAM steps. Throughout this paper, $c$ is used to denote an intermediate positive constant, and $c_{1}-c_{4}$ are positive constants. All of them are independent of the iteration process.
\subsection{Description of the $0$-th KAM step}\label{section-2}
For sufficiently large integer $m$, let $\rho$ be a constant with $0<\rho<1$, and assume $\eta>0$ such that $(1+\rho)^{\eta}>2$. Define
\begin{equation*}
\gamma=\e^{\frac{1}{4(n+m+2)}}.
\end{equation*}
The parameters in the $0$-th KAM step are defined by
\begin{equation*}
h_{0}=h,\qquad s_{0}= s,\qquad \gamma_{0}= \gamma,\qquad \mu_{0}= \e^{\frac{1}{8\eta(m+1)}},
\end{equation*}
\begin{equation*}
D(h_{0})=\{\theta\in \mathbb{C}^{n}: {\rm Re}\ \theta\in\mathbb{T}^{n}, \ |{\rm Im}\ \theta|\leq h_{0}\},\qquad G(s_{0})=\{r\in \mathbb{C}^{n}:{\rm Re}\ r\in E, \ |{\rm Im}\ r|\leq s_{0}\},
\end{equation*}
where $0<s_{0},h_{0},\gamma_{0},\mu_{0}\leq 1$, and denote $\mathcal{D}_{0}:=\mathcal{D}(h_{0},s_{0})=D(h_{0})\times G(s_{0})$ for simplicity.

The mapping at $0$-th KAM step is
\begin{equation*}\mathscr{F}_{0}:
\begin{cases}
\theta^{1}_{0}=\theta_{0}+\omega_{0}(r_{0})+f_{0}(\theta_{0},r_{0},\e),\\
r^{1}_{0}=r_{0}+g_{0}(\theta_{0},r_{0},\e),
\end{cases}
\end{equation*}
where $\omega_{0}(r_{0})=\omega(r_{*})=p$, $f_{0}(\theta_{0},r_{0},\e)=\e f(\theta_{0},r_{0},\e)$, and $ g_{0}(\theta_{0},r_{0},\e)=\e g(\theta_{0},r_{0},\e)$.
The following lemma states the estimates on perturbations $f_{0}$ and $g_{0}$.
\begin{lemma}Assume $\e_{0}$ is sufficiently small so that
\begin{equation*}
\e^{\frac{3}{4}}(||f||_{\mathcal{D}_{0}}+||g||_{\mathcal{D}_{0}})\leq s^{m}_{0}\e^{\frac{1}{8\eta(m+1)}},
\end{equation*}
for $0<\e<\e_{0}$.
Then
\begin{equation*}
||f_{0}||_{\mathcal{D}_{0}}+  ||g_{0}||_{\mathcal{D}_{0}}\leq \gamma^{n+m+2}_{0}s^{m}_{0}\mu_{0}.
\end{equation*}
\end{lemma}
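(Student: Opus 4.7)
The proof I would give is essentially a bookkeeping verification: unpack the definitions of $f_0$, $g_0$, $\gamma_0$, $\mu_0$ and check that the inequality reduces exactly to the hypothesis after a clean rearrangement. There is no analytic content beyond comparing powers of $\e$.

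First I would note that, by the initial setup of the $0$-th step, $f_0(\theta_0,r_0,\e)=\e f(\theta_0,r_0,\e)$ and $g_0(\theta_0,r_0,\e)=\e g(\theta_0,r_0,\e)$, so the $\|\cdot\|_{\mathcal{D}_0}$ norm, which is linear in its argument, gives
\begin{equation*}
\|f_0\|_{\mathcal{D}_0}+\|g_0\|_{\mathcal{D}_0}=\e\bigl(\|f\|_{\mathcal{D}_0}+\|g\|_{\mathcal{D}_0}\bigr).
\end{equation*}
Next I would compute the right-hand side of the target inequality in terms of $\e$ alone, using $\gamma_0=\e^{1/(4(n+m+2))}$ and $\mu_0=\e^{1/(8\eta(m+1))}$. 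This yields $\gamma_0^{n+m+2}=\e^{1/4}$ and hence
\begin{equation*}
\gamma_0^{n+m+2}s_0^{m}\mu_0=\e^{1/4}s_0^{m}\e^{1/(8\eta(m+1))}.
\end{equation*}

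With these two identities in hand, the desired inequality $\|f_0\|_{\mathcal{D}_0}+\|g_0\|_{\mathcal{D}_0}\leq\gamma_0^{n+m+2}s_0^{m}\mu_0$ becomes
\begin{equation*}
\e\bigl(\|f\|_{\mathcal{D}_0}+\|g\|_{\mathcal{D}_0}\bigr)\leq\e^{1/4}s_0^{m}\e^{1/(8\eta(m+1))},
\end{equation*}
which, after dividing by $\e^{1/4}$, is exactly the smallness assumption $\e^{3/4}(\|f\|_{\mathcal{D}_0}+\|g\|_{\mathcal{D}_0})\leq s_0^{m}\e^{1/(8\eta(m+1))}$ imposed in the hypothesis. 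So the proof terminates in one line once the definitions are written out.

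There is no real obstacle here; the only thing to be careful about is the exponent arithmetic, in particular confirming that the choice $\gamma_0=\e^{1/(4(n+m+2))}$ is precisely tuned so that $\gamma_0^{n+m+2}$ produces the factor $\e^{1/4}$ that converts the hypothesis into the conclusion. That matching is presumably the reason the constants in Section \ref{section-2} were fixed the way they were, and recording it here sets up the bookkeeping that will be reused at every KAM step.
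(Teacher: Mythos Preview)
Your proof is correct and follows essentially the same approach as the paper: both arguments unpack the definitions $f_0=\e f$, $g_0=\e g$, $\gamma_0^{n+m+2}=\e^{1/4}$, $\mu_0=\e^{1/(8\eta(m+1))}$ and reduce the conclusion to the hypothesis by matching the powers of $\e$. The only cosmetic difference is that the paper writes the chain of inequalities starting from the right-hand side and working down to $\|f_0\|_{\mathcal D_0}+\|g_0\|_{\mathcal D_0}$, whereas you start from the left-hand side, but the content is identical.
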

\begin{proof}Following $\gamma_{0}^{n+m+2}=\e^{\frac{1}{4}}$ and $\mu_{0}=\e^{\frac{1}{8\eta(m+1)}}$, one has
\begin{align*}
\gamma^{n+m+2}_{0}s^{m}_{0}\mu_{0}&= s^{m}_{0}\e^{\frac{1}{4}}\e^{\frac{1}{8\eta(m+1)}}\\
&\geq s^{m}_{0}\e^{\frac{1}{4}}\e^{\frac{1}{8\eta(m+1)}}s^{-m}_{0}\e^{\frac{3}{4}}\e^{-\frac{1}{8\eta(m+1)}}(||f||_{\mathcal{D}_{0}}+||g||_{\mathcal{D}_{0}})\\
&=\e(||f||_{\mathcal{D}_{0}}+||g||_{\mathcal{D}_{0}})\\
&=||f_{0}||_{\mathcal{D}_{0}}+  ||g_{0}||_{\mathcal{D}_{0}}.
\end{align*}
\end{proof}
\subsection{Description of the $\nu$-th KAM step }
We now define the parameters appear in $\nu$-th KAM step:
\begin{equation*}
h_{\nu}=\frac{h_{\nu-1}}{2}+\frac{h_{0}}{4},\quad s_{\nu}=\frac{s_{\nu-1}}{2},\quad \mu_{\nu}=\mu^{1+\rho}_{\nu-1},\quad \mathcal{D}_{\nu}=\mathcal{D}(h_{\nu},s_{\nu}).
\end{equation*}
After $\nu$ KAM steps, the mapping becomes
\begin{equation*}\mathscr{F}_{\nu}:
\begin{cases}
\theta^{1}_{\nu}=\theta_{\nu}+\omega_{0}(r_{0})+f_{\nu}(\theta_{\nu},r_{\nu},\e),\\
r^{1}_{\nu}=r_{\nu}-\sum\limits^{\nu}_{i=0}r^{*}_{i}+g_{\nu}(\theta_{\nu},r_{\nu},\e).
\end{cases}
\end{equation*}
Moreover,
\begin{equation}\label{equation-4.4}
||f_{\nu}||_{\mathcal{D}_{\nu}}+  ||g_{\nu}||_{\mathcal{D}_{\nu}}\leq \gamma^{n+m+2}_{0}s^{m}_{\nu}\mu_{\nu}.
\end{equation}
Define
\begin{align*}
h_{\nu+1}&=\frac{h_{\nu}}{2}+\frac{h_{0}}{4},\\
s_{\nu+1}&=\frac{s_{\nu}}{2},\\
\mu_{\nu+1}&=\mu^{1+\rho}_{\nu},\\
K_{\nu+1}&=([\log\frac{1}{\mu_{\nu}}]+1)^{3\eta},\\
\mathscr{D}_{i}&=\mathcal{D}(h_{\nu+1}+\frac{i-1}{4}(h_{\nu}-h_{\nu+1}),is_{\nu+1}),\quad i=1,2,3,4,\\
\mathcal{D}_{\nu+1}&=\mathcal{D}(h_{\nu+1},s_{\nu+1}),\\
\hat{\mathcal{D}}_{\nu+1}&=\mathcal{D}(h_{\nu+2}+\frac{3}{4}(h_{\nu+1}-h_{\nu+2}),s_{\nu+2}),\\
\Gamma(h_{\nu}-h_{\nu+1})&=\sum_{0<|k|\leq K_{\nu+1}}|k|^{\tau}e^{-|k|\frac{h_{\nu}-h_{\nu+1}}{4}}\leq \frac{4^{\tau}\tau!}{(h_{\nu}-h_{\nu+1})^{\tau}}.
\end{align*}
For simplicity of notation, we also denote
\begin{align*}
&\mathscr{D}_{3}:=\mathscr{D}_{*}\times\mathscr{G}_{*}:=D(h_{\nu+1}+\frac{1}{2}(h_{\nu}-h_{\nu+1}))\times G(3s_{\nu+1}),\\
&\mathscr{D}_{4}:=\mathscr{D}_{**}\times\mathscr{G}_{**}:=D(h_{\nu+1}+\frac{3}{4}(h_{\nu}-h_{\nu+1}))\times G(4s_{\nu+1}).\\
\end{align*}
\subsubsection{Truncation}
The Fourier series expansion of $f_{\nu}(\theta_{\nu+1},r_{\nu+1},\e)$ is
\begin{equation}\label{equation-42}
f_{\nu}(\theta_{\nu+1},r_{\nu+1},\e)=\sum_{k\in\mathbb{Z}^{n}}f_{k,\nu}(r_{\nu+1})e^{{\rm i}\langle k,\theta_{\nu+1}\rangle},
\end{equation}
where $f_{k,\nu}(r_{\nu+1})=\int_{\mathbb{T}^n}f_{\nu}(\theta_{\nu+1},r_{\nu+1},\e)e^{-{\rm i}\langle k,\theta_{\nu+1}\rangle}\,d\theta_{\nu+1}$ is the Fourier coefficient of $f_{\nu}$.
The truncation and remainder of $f_{\nu}(\theta_{\nu+1},r_{\nu+1},\e)$ are respectively
\begin{align*}
\mathcal{T}_{K_{\nu+1}}f_{\nu}(\theta_{\nu+1},r_{\nu+1},\e)&=\sum_{0<|k|\leq K_{\nu+1}}f_{k,\nu}e^{{\rm i}\langle k,\theta_{\nu+1}\rangle},\\
\mathcal{R}_{K_{\nu+1}}f_{\nu}(\theta_{\nu+1},r_{\nu+1},\e)&=\sum_{|k|> K_{\nu+1}}f_{k,\nu}e^{{\rm i}\langle k,\theta_{\nu+1}\rangle}.
\end{align*}
Thus, $f_{\nu}$ has an equivalent expression of the form
\begin{equation*}
f_{\nu}(\theta_{\nu+1},r_{\nu+1},\e)=f_{0,\nu}(r_{\nu+1})+\mathcal{T}_{K_{\nu+1}}f_{\nu}(\theta_{\nu+1},r_{\nu+1},\e)+\mathcal{R}_{K_{\nu+1}}f_{\nu}(\theta_{\nu+1},r_{\nu+1},\e).
\end{equation*}
Furthermore, we have the following estimate.
\begin{lemma}
If
\begin{equation*}
{(\rm H1)}\qquad\int^{+\infty}_{K_{\nu+1}}l^{n}e^{-l\frac{h_{\nu}-h_{\nu+1}}{4}}\,dl\leq \mu_{\nu},
\end{equation*}
then there exists a constant $c_{1}$ such that
\begin{equation*}
|| \mathcal{R}_{K_{\nu+1}}f_{\nu}||_{\mathscr{D}_{3}}\leq c_{1}\gamma^{n+m+2}_{0}s^{m}_{\nu}\mu^{2}_{\nu}.
\end{equation*}
\end{lemma}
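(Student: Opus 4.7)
My plan is to use the standard Cauchy/Fourier-truncation route: control the coefficients $f_{k,\nu}$ via analyticity of $f_\nu$ on $\mathcal{D}_\nu$, extract a geometric damping on the shrunk domain $\mathscr{D}_3$, and then feed the resulting lattice sum into the integral hypothesis (H1) together with the inductive norm bound \eqref{equation-4.4}.

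Starting from $f_{k,\nu}(r_{\nu+1})=(2\pi)^{-n}\int_{\mathbb{T}^n}f_\nu(\theta,r_{\nu+1},\e)e^{-i\langle k,\theta\rangle}\,d\theta$, I would shift the torus of integration by $h_\nu$ in the direction $-\mathrm{sgn}(k_j)$ in each coordinate; by analyticity of $f_\nu$ on $D(h_\nu)$ this yields the Cauchy-type bound $|f_{k,\nu}(r_{\nu+1})|\leq |f_\nu|_{\mathcal{D}_\nu}\,e^{-|k|h_\nu}$. For $(\theta_{\nu+1},r_{\nu+1})\in \mathscr{D}_3$ one has $|\mathrm{Im}\,\theta_{\nu+1}|\leq h_{\nu+1}+\tfrac{1}{2}(h_\nu-h_{\nu+1})$, so $|e^{i\langle k,\theta_{\nu+1}\rangle}|\leq e^{|k|[h_{\nu+1}+(h_\nu-h_{\nu+1})/2]}$ and every term in $\mathcal{R}_{K_{\nu+1}}f_\nu$ inherits a net damping of $e^{-|k|(h_\nu-h_{\nu+1})/2}$.

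Next I would convert the lattice sum to an integral using the standard count $\#\{k\in\mathbb{Z}^n:|k|=l\}\leq c\,l^{n-1}$ and split the gap as $(h_\nu-h_{\nu+1})/2=2\cdot(h_\nu-h_{\nu+1})/4$ to bring the tail into the form appearing in (H1):
\[
\sum_{|k|>K_{\nu+1}} e^{-|k|(h_\nu-h_{\nu+1})/2}\leq c\int_{K_{\nu+1}}^{\infty}l^{n}e^{-l(h_\nu-h_{\nu+1})/4}\,dl\leq c\,\mu_\nu.
\]
Multiplying by the inductive bound $|f_\nu|_{\mathcal{D}_\nu}\leq \gamma_0^{n+m+2}s_\nu^m\mu_\nu$ from \eqref{equation-4.4} supplies the second factor of $\mu_\nu$ and delivers $|\mathcal{R}_{K_{\nu+1}}f_\nu|_{\mathscr{D}_3}\leq c\,\gamma_0^{n+m+2}s_\nu^m\mu_\nu^2$. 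The $\varpi_1$-seminorm contribution to $\|\mathcal{R}_{K_{\nu+1}}f_\nu\|_{\mathscr{D}_3}$ is handled by applying the same argument to the difference $f_\nu(\theta,r',\e)-f_\nu(\theta,r'',\e)$: its Fourier coefficients satisfy $|f_{k,\nu}(r')-f_{k,\nu}(r'')|\leq [f_\nu]_{\varpi_1}\,\varpi_1(|r'-r''|)\,e^{-|k|h_\nu}$, so the identical tail summation produces the same bound with an extra factor $\varpi_1(|r'-r''|)$ that is absorbed into the seminorm.

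The main obstacle is really bookkeeping rather than substance: one must verify that the contour shift in $\theta$ stays inside the analyticity region (which it does since $h_{\nu+1}+\tfrac12(h_\nu-h_{\nu+1})<h_\nu$) and that the $\varpi_1$-seminorm on the enlarged radius $3s_{\nu+1}$ is controlled by the inductive hypothesis on $f_\nu$; after that, the final constant $c_1$ absorbs the lattice-count constant, the exponential-splitting factor $K_{\nu+1}^{-1}e^{-K_{\nu+1}(h_\nu-h_{\nu+1})/4}\leq 1$, and any Gamma-function prefactor from the integral.
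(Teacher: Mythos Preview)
Your proposal is correct and follows essentially the same route as the paper: a Cauchy-type decay estimate on the Fourier coefficients, a geometric damping on the shrunk $\theta$-strip, conversion of the lattice tail to the integral in (H1), and then an identical treatment of the $\varpi_1$-seminorm applied to $f_\nu(\theta,r',\e)-f_\nu(\theta,r'',\e)$. The only cosmetic difference is that the paper performs the contour shift to the intermediate strip $\mathscr{D}_4$ (width $h_{\nu+1}+\tfrac34(h_\nu-h_{\nu+1})$), obtaining net damping $e^{-|k|(h_\nu-h_{\nu+1})/4}$ directly, whereas you shift all the way to $D(h_\nu)$ and get the stronger factor $e^{-|k|(h_\nu-h_{\nu+1})/2}$ before dominating by the (H1) integrand; either choice closes the estimate with the same constant structure.
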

\begin{proof}
 Since the Fourier coefficients decay exponentially, one has
\begin{align*}
| f_{k,\nu}|_{\mathscr{G}_{*}}
&\leq | f_{\nu}| _{\mathscr{D}_{4}}e^{-| k| (h_{\nu+1}+\frac{3}{4}(h_{\nu}-h_{\nu+1}))}\\
&\leq \gamma^{n+m+2}_{0}s^{m}_{\nu}\mu_{\nu}e^{-| k| (h_{\nu+1}+\frac{3}{4}(h_{\nu}-h_{\nu+1}))},
\end{align*}
then
\begin{align*}
| \mathcal{R}_{K_{\nu+1}}f_{\nu}|_{\mathscr{D}_{3}}&\leq\sum_{| k|>K_{\nu+1}}| f_{k,\nu}|_{\mathscr{G}_{*}} e^{| k|(h_{\nu+1}+\frac{1}{2}(h_{\nu}-h_{\nu+1}))}\\
&\leq\sum_{| k|> K_{\nu+1}}| f_{\nu}| _{\mathscr{D}_{4}}e^{-| k| \frac{h_{\nu}-h_{\nu+1}}{4}}\\
&\leq\gamma^{n+m+2}_{0}s^{m}_{\nu}\mu_{\nu}\sum_{| k|>K_{\nu+1}}e^{-| k|\frac{h_{\nu}-h_{\nu+1}}{4}}\\
&\leq\gamma^{n+m+2}_{0}s^{m}_{\nu}\mu_{\nu}\int^{+\infty}_{K_{\nu+1}}l^{n}e^{-l\frac{h_{\nu}-h_{\nu+1}}{4}}\,dl\\
&\leq\gamma^{n+m+2}_{0}s^{m}_{\nu}\mu^{2}_{\nu}.
\end{align*}
Moreover,
\begin{align*}
[ \mathcal{R}_{K_{\nu+1}}f_{\nu}]_{\varpi_{1}}&=\sup_{\theta_{\nu+1}\in \mathscr{D}_{*} }\sup_{\substack{ r'_{\nu+1},r''_{\nu+1}\in \mathscr{G}_{*}\\ r'_{\nu+1}\neq r''_{\nu+1}}}\frac{| \mathcal{R}_{K_{\nu+1}}f_{\nu}(\theta_{\nu+1},r'_{\nu+1},\e)-\mathcal{R}_{K_{\nu+1}}f_{\nu}(\theta_{\nu+1},r''_{\nu+1},\e)|}{\varpi_{1}(| r'_{\nu+1}-r''_{\nu+1}|)}\\
&=\sup_{\theta_{\nu+1}\in \mathscr{D}_{*} }\sup_{\substack{ r'_{\nu+1},r''_{\nu+1}\in \mathscr{G}_{*}\\ r'_{\nu+1}\neq r''_{\nu+1}}}\frac{\big|\sum\limits_{| k|>K_{\nu+1}} f_{k,\nu}(r'_{\nu+1})e^{{\rm i}\langle k,\theta_{\nu+1}\rangle}-\sum\limits_{| k|>K_{\nu+1}}f_{k,\nu}(r''_{\nu+1})e^{{\rm i}\langle k,\theta_{\nu+1}\rangle}\big|}{\varpi_{1}(| r'_{\nu+1}-r''_{\nu+1}|)}\\
&\leq\sup_{\theta_{\nu+1}\in \mathscr{D}_{*} }\sup_{\substack{ r'_{\nu+1},r''_{\nu+1}\in \mathscr{G}_{*}\\ r'_{\nu+1}\neq r''_{\nu+1}}}\frac{\sum\limits_{| k|>K_{\nu+1}}| f_{k,\nu}(r'_{\nu+1})-f_{k,\nu}(r''_{\nu+1})| e^{|k|(h_{\nu+1}+\frac{1}{2}(h_{\nu}-h_{\nu+1}))}}{\varpi_{1}(| r'_{\nu+1}-r''_{\nu+1}|)}\\
&\leq\sup_{\theta_{\nu+1}\in \mathscr{D}_{**} }\sup_{\substack{ r'_{\nu+1},r''_{\nu+1}\in \mathscr{G}_{**}\\ r'_{\nu+1}\neq r''_{\nu+1}}}\frac{| f_{\nu}(\theta_{\nu+1},r'_{\nu+1},\e)-f_{\nu}(\theta_{\nu+1},r''_{\nu+1},\e)| \sum\limits_{|k|>K_{\nu+1}} e^{-|k|\frac{h_{\nu}-h_{\nu+1}}{4}}}{\varpi_{1}(|r'_{\nu+1}-r''_{\nu+1}|)}\\
&\leq[f_{\nu}]_{\varpi_{1}}\sum_{| k|>K_{\nu+1}}e^{-|k|\frac{h_{\nu}-h_{\nu+1}}{4}}\\
&\leq[f_{\nu}]_{\varpi_{1}}\sum_{| k|>K_{\nu+1}}k^{n}e^{-|k|\frac{h_{\nu}-h_{\nu+1}}{4}}\\
&\leq[f_{\nu}]_{\varpi_{1}}\int^{+\infty}_{K_{\nu+1}}l^{n}e^{-l\frac{h_{\nu}-h_{\nu+1}}{4}}\,dl\\
&\leq\gamma^{n+m+2}_{0}s^{m}_{\nu}\mu^{2}_{\nu},
\end{align*}
i.e.,
\begin{equation*}
||\mathcal{R}_{K_{\nu+1}}f_{\nu}||_{\mathscr{D}_{3}}=|\mathcal{R}_{K_{\nu+1}}f_{\nu}|_{\mathscr{D}_{3}}+[\mathcal{R}_{K_{\nu+1}}f_{\nu}]_{\varpi_{1}}\leq c_{1}\gamma^{n+m+2}_{0}s^{m}_{\nu}\mu^{2}_{\nu}.
\end{equation*}
\end{proof}
Similarly, we get
\begin{equation*}
||\mathcal{R}_{K_{\nu+1}}g_{\nu}||_{\mathscr{D}_{3}}\leq c_{1}\gamma^{n+m+2}_{0}s^{m}_{\nu}\mu^{2}_{\nu}.
\end{equation*}
\subsubsection{Transformation}
For $\mathscr{F}_{\nu}$, on $\mathcal{D}_{\nu+1}$, introduce a transformation $\mathscr{U}_{\nu+1}:={\rm id}+(U_{\nu+1},V_{\nu+1})$ that satisfies
\begin{equation}\label{equation-4.2}
\mathscr{U}_{\nu+1}\circ\bar{\mathscr{F}}_{\nu+1}=\mathscr{F}_{\nu}\circ\mathscr{U}_{\nu+1},
\end{equation}
where ${\rm id}$ denotes the identity mapping. Since
\begin{equation*}
\mathscr{U}_{\nu+1}:
\begin{cases}
\theta^{1}_{\nu}=\theta^{1}_{\nu+1}+U_{\nu+1}(\theta^{1}_{\nu+1},r^{1}_{\nu+1}),\\
r^{1}_{\nu}=r^{1}_{\nu+1}+V_{\nu+1}(\theta^{1}_{\nu+1},r^{1}_{\nu+1}),
\end{cases}
\end{equation*}
and
\begin{equation*}\bar{\mathscr{F}}_{\nu+1}:
\begin{cases}
\theta^{1}_{\nu+1}=\theta_{\nu+1}+\omega_{0}(r_{0})+\bar{f}_{\nu+1}(\theta_{\nu+1},r_{\nu+1},\e),\\
r^{1}_{\nu+1}=r_{\nu+1}-\sum\limits^{\nu}_{i=0}r^{*}_{i}+\bar{g}_{\nu+1}(\theta_{\nu+1},r_{\nu+1},\e),
\end{cases}
\end{equation*}
with $r^{*}_{0}=0$, from the left side of \eqref{equation-4.2}, we can derive that
\begin{align*}
\theta^{1}_{\nu}=&\theta^{1}_{\nu+1}+U_{\nu+1}(\theta^{1}_{\nu+1},r^{1}_{\nu+1})\\
=&\theta_{\nu+1}+\omega_{0}(r_{0})+\bar{f}_{\nu+1}(\theta_{\nu+1},r_{\nu+1},\e)\\
+&U_{\nu+1}(\theta_{\nu+1}+\omega_{0}(r_{0})+\bar{f}_{\nu+1},r_{\nu+1}-\sum\limits^{\nu}_{i=0}r^{*}_{i}+\bar{g}_{\nu+1}),\\
r^{1}_{\nu}=&r^{1}_{\nu+1}+V_{\nu+1}(\theta^{1}_{\nu+1},r^{1}_{\nu+1})\\
=&r_{\nu+1}-\sum\limits^{\nu}_{i=0}r^{*}_{i}+\bar{g}_{\nu+1}(\theta_{\nu+1},r_{\nu+1},\e)\\
+&V_{\nu+1}(\theta_{\nu+1}+\omega_{0}(r_{0})+\bar{f}_{\nu+1},r_{\nu+1}-\sum\limits^{\nu}_{i=0}r^{*}_{i}+\bar{g}_{\nu+1}).
\end{align*}
Also, one has
\begin{equation*}\mathscr{F}_{\nu}:
\begin{cases}
\theta^{1}_{\nu}=\theta_{\nu}+\omega_{0}(r_{0})+f_{\nu}(\theta_{\nu},r_{\nu},\e),\\
r^{1}_{\nu}=r_{\nu}-\sum\limits^{\nu}_{i=0}r^{*}_{i}+g_{\nu}(\theta_{\nu},r_{\nu},\e),
\end{cases}
\end{equation*}
and
\begin{equation*}\mathscr{U}_{\nu+1}:
\begin{cases}
\theta_{\nu}=\theta_{\nu+1}+U_{\nu+1}(\theta_{\nu+1},r_{\nu+1}-\sum\limits^{\nu}_{i=0}r^{*}_{i}),\\
r_{\nu}=r_{\nu+1}+V_{\nu+1}(\theta_{\nu+1},r_{\nu+1}-\sum\limits^{\nu}_{i=0}r^{*}_{i}).
\end{cases}
\end{equation*}
By the right side of \eqref{equation-4.2}, we obtain
\begin{align*}
\theta^{1}_{\nu}=&\theta_{\nu}+\omega_{0}(r_{0})+f_{\nu}(\theta_{\nu},r_{\nu},\e)\\
=&\theta_{\nu+1}+U_{\nu+1}(\theta_{\nu+1},r_{\nu+1}-\sum\limits^{\nu}_{i=0}r^{*}_{i})+\omega_{0}(r_{0})\\
+&f_{\nu}(\theta_{\nu+1}+U_{\nu+1},r_{\nu+1}+V_{\nu+1},\e),\\
r^{1}_{\nu}=&r_{\nu}-\sum\limits^{\nu}_{i=0}r^{*}_{i}+g_{\nu}(\theta_{\nu},r_{\nu},\e)\\
=&r_{\nu+1}+V_{\nu+1}(\theta_{\nu+1},r_{\nu+1}-\sum\limits^{\nu}_{i=0}r^{*}_{i})-\sum\limits^{\nu}_{i=0}r^{*}_{i}\\
+& g_{\nu}(\theta_{\nu+1}+U_{\nu+1},r_{\nu+1}+V_{\nu+1},\e).
\end{align*}
 Therefore,
 \eqref{equation-4.2} implies that
\begin{align}\label{equation-43}
&\omega_{0}(r_{0})+\bar{f}_{\nu+1}(\theta_{\nu+1},r_{\nu+1},\e)+U_{\nu+1}(\theta_{\nu+1}+\omega_{0}(r_{0})+\bar{f}_{\nu+1},r_{\nu+1}-\sum\limits^{\nu}_{i=0}r^{*}_{i}+\bar{g}_{\nu+1})\notag\\
=&U_{\nu+1}(\theta_{\nu+1},r_{\nu+1}-\sum\limits^{\nu}_{i=0}r^{*}_{i})+\omega_{0}(r_{0})+f_{\nu}(\theta_{\nu+1}+U_{\nu+1},r_{\nu+1}+V_{\nu+1},\e),\\
\label{equation-44}
&\bar{g}_{\nu+1}(\theta_{\nu+1},r_{\nu+1},\e)+V_{\nu+1}(\theta_{\nu+1}+\omega_{0}(r_{0})+\bar{f}_{\nu+1},r_{\nu+1}-\sum\limits^{\nu}_{i=0}r^{*}_{i}+\bar{g}_{\nu+1})\notag\\
=&V_{\nu+1}(\theta_{\nu+1},r_{\nu+1}-\sum\limits^{\nu}_{i=0}r^{*}_{i})+g_{\nu}(\theta_{\nu+1}+U_{\nu+1},r_{\nu+1}+V_{\nu+1},\e).
\end{align}
Following the iteration process before, the $\omega_{0}(r_{0})$ on the right side of \eqref{equation-44} is actually
\begin{equation*}
\omega_{0}(r_{0})=\omega_{0}(r_{\nu})+\sum^{\nu-1}_{i=0}f_{0,i}(r_{\nu}).
\end{equation*}
Since the frequency $\omega_{0}$ is continuous about $r$, assume that there exists a modulus of continuity $\varpi_{*}$ such that
\begin{equation}\label{eq-46}
[\omega_{0}]_{\varpi_{*}}=\frac{|\omega_{0}(r_{\nu})-\omega_{0}(r_{\nu+1})|}{\varpi_{*}(|r_{\nu}-r_{\nu+1}|)}<+\infty.
\end{equation}
The perturbation $f$ is $\varpi_{1}$ continuous about $r$ with $\varpi_{1}\leq \varpi_{*}$ due to $\varpi_{1}(x)=x$, one has
\begin{equation}\label{eq-47}
[f_{0,i}]_{\varpi_{1}}=\frac{|f_{0,i}(r_{\nu})-f_{0,i}(r_{\nu+1})|}{\varpi_{1}(|r_{\nu}-r_{\nu+1}|)}<+\infty,\qquad 0\leq i\leq \nu-1 .
\end{equation}
Consequently, from \eqref{eq-46} and \eqref{eq-47}, we may deduce that
\begin{equation*}
\omega_{0}(r_{\nu})=\omega_{0}(r_{\nu+1})+\mathcal{O}(\varpi_{*}(|V_{\nu+1}|)),
\end{equation*}
and
\begin{equation*}
f_{0,i}(r_{\nu})=f_{0,i}(r_{\nu+1})+\mathcal{O}(\varpi_{1}(|V_{\nu+1}|)).
\end{equation*}
Then \eqref{equation-43} and \eqref{equation-44} are equal to the following
\begin{align*}
&U_{\nu+1}(\theta_{\nu+1}+\omega_{0}(r_{0})+\bar{f}_{\nu+1},r_{\nu+1}-\sum\limits^{\nu}_{i=0}r^{*}_{i}+\bar{g}_{\nu+1})-U_{\nu+1}(\theta_{\nu+1}+\omega_{0}(r_{0}),r_{\nu+1}-\sum\limits^{\nu}_{i=0}r^{*}_{i})\notag\\
+&U_{\nu+1}(\theta_{\nu+1}+\omega_{0}(r_{0}),r_{\nu+1}-\sum\limits^{\nu}_{i=0}r^{*}_{i})-U_{\nu+1}(\theta_{\nu+1},r_{\nu+1}-\sum\limits^{\nu}_{i=0}r^{*}_{i})\notag\\
+&\omega_{0}(r_{0})+\bar{f}_{\nu+1}(\theta_{\nu+1},r_{\nu+1},\e)\notag\\
=&\omega_{0}(r_{\nu+1})+\sum^{\nu-1}_{i=0}f_{0,i}(r_{\nu+1})+\mathcal{O}(\varpi_{*}(|V_{\nu+1}|))+\mathcal{O}(\varpi_{1}(|V_{\nu+1}|))\notag\\
+&f_{\nu}(\theta_{\nu+1}+U_{\nu+1},r_{\nu+1}+V_{\nu+1},\e)-f_{\nu}(\theta_{\nu+1},r_{\nu+1},\e)+f_{0,\nu}(r_{\nu+1})+\mathcal{T}_{K_{\nu+1}}f_{\nu}(\theta_{\nu+1},r_{\nu+1},\e)\\
+&\mathcal{R}_{K_{\nu+1}}f_{\nu}(\theta_{\nu+1},r_{\nu+1},\e),\\
&V_{\nu+1}(\theta_{\nu+1}+\omega_{0}(r_{0})+\bar{f}_{\nu+1},r_{\nu+1}-\sum\limits^{\nu}_{i=0}r^{*}_{i}+\bar{g}_{\nu+1})-V_{\nu+1}(\theta_{\nu+1}+\omega_{0}(r_{0}),r_{\nu+1}-\sum\limits^{\nu}_{i=0}r^{*}_{i})\notag\\
+&V_{\nu+1}(\theta_{\nu+1}+\omega_{0}(r_{0}),r_{\nu+1}-\sum\limits^{\nu}_{i=0}r^{*}_{i})-V_{\nu+1}(\theta_{\nu+1},r_{\nu+1}-\sum\limits^{\nu}_{i=0}r^{*}_{i})+\bar{g}_{\nu+1}(\theta_{\nu+1},r_{\nu+1},\e)\notag\\
=&g_{\nu}(\theta_{\nu+1}+U_{\nu+1},r_{\nu+1}+V_{\nu+1},\e)-g_{\nu}(\theta_{\nu+1},r_{\nu+1},\e)+g_{0,\nu}(r_{\nu+1})+\mathcal{T}_{K_{\nu+1}}g_{\nu}(\theta_{\nu+1},r_{\nu+1},\e)\\
+&\mathcal{R}_{K_{\nu+1}}g_{\nu}(\theta_{\nu+1},r_{\nu+1},\e).
\end{align*}
The transformation $\mathscr{U}_{\nu+1}={\rm id}+(U_{\nu+1},V_{\nu+1})$ needs to satisfy the homological equations
\begin{equation}\label{eq-4.8}
\begin{cases}
U_{\nu+1}(\theta_{\nu+1}+\omega_{0}(r_{0}),r_{\nu+1}-\sum\limits^{\nu}_{i=0}r^{*}_{i})-U_{\nu+1}(\theta_{\nu+1},r_{\nu+1}-\sum\limits^{\nu}_{i=0}r^{*}_{i})=\mathcal{T}_{K_{\nu+1}}f_{\nu}(\theta_{\nu+1},r_{\nu+1},\e),\\
V_{\nu+1}(\theta_{\nu+1}+\omega_{0}(r_{0}),r_{\nu+1}-\sum\limits^{\nu}_{i=0}r^{*}_{i})-V_{\nu+1}(\theta_{\nu+1},r_{\nu+1}-\sum\limits^{\nu}_{i=0}r^{*}_{i})=\mathcal{T}_{K_{\nu+1}}g_{\nu}(\theta_{\nu+1},r_{\nu+1},\e).
\end{cases}
\end{equation}
%\begin{align}\label{equation-4.10}
%&U_{\nu+1}(\theta_{\nu+1}+\omega_{0}(r_{0}),r_{\nu+1}-\sum\limits^{\nu}_{i=0}r^{*}_{i})-U_{\nu+1}(\theta_{\nu+1},r_{\nu+1}-\sum\limits^{\nu}_{i=0}r^{*}_{i})\notag\\
%=&\mathcal{T}_{K_{\nu+1}}f_{\nu}(\theta_{\nu+1},r_{\nu+1},\e),
%\end{align}
%and
%\begin{align}\label{equation-4.11}
%&V_{\nu+1}(\theta_{\nu+1}+\omega_{0}(r_{0}),r_{\nu+1}-\sum\limits^{\nu}_{i=0}r^{*}_{i})-V_{\nu+1}(\theta_{\nu+1},r_{\nu+1}-\sum\limits^{\nu}_{i=0}r^{*}_{i})\notag\\
%=&\mathcal{T}_{K_{\nu+1}}g_{\nu}(\theta_{\nu+1},r_{\nu+1},\e).
%\end{align}
The new perturbations are respectively
\begin{align}\label{eq-4.10}
\bar{f}_{\nu+1}(\theta_{\nu+1},r_{\nu+1},\e)&=f_{\nu}(\theta_{\nu+1}+U_{\nu+1},r_{\nu+1}+V_{\nu+1},\e)-f_{\nu}(\theta_{\nu+1},r_{\nu+1},\e)\notag\\
&+\mathcal{R}_{K_{\nu+1}}f_{\nu}(\theta_{\nu+1},r_{\nu+1},\e)+\mathcal{O}(\varpi_{*}(|V_{\nu+1}|))+\mathcal{O}(\varpi_{1}(|V_{\nu+1}|))\notag\\
&+U_{\nu+1}(\theta_{\nu+1}+\omega_{0}(r_{0}),r_{\nu+1}-\sum^{\nu}_{i=0}r^{*}_{i})\notag\\
&-U_{\nu+1}(\theta_{\nu+1}+\omega_{0}(r_{0})+\bar{f}_{\nu+1},r_{\nu+1}-\sum^{\nu}_{i=0}r^{*}_{i}+\bar{g}_{\nu+1}),\\
\label{eq-4.11}
\bar{g}_{\nu+1}(\theta_{\nu+1},r_{\nu+1},\e)&=g_{\nu}(\theta_{\nu+1}+U_{\nu+1},r_{\nu+1}+V_{\nu+1},\e)-g_{\nu}(\theta_{\nu+1},r_{\nu+1},\e)\notag\\
&+g_{0,\nu}(r_{\nu+1})+\mathcal{R}_{K_{\nu+1}}g_{\nu}(\theta_{\nu+1},r_{\nu+1},\e)\notag\\
&+V_{\nu+1}(\theta_{\nu+1}+\omega_{0}(r_{0}),r_{\nu+1}-\sum^{\nu}_{i=0}r^{*}_{i})\notag\\
&-V_{\nu+1}(\theta_{\nu+1}+\omega_{0}(r_{0})+\bar{f}_{\nu+1},r_{\nu+1}-\sum^{\nu}_{i=0}r^{*}_{i}+\bar{g}_{\nu+1}).
\end{align}

The homological equations \eqref{eq-4.8} %\eqref{equation-4.10} and \eqref{equation-4.11}
are uniquely solvable on $\mathcal{D}_{\nu+1}$.  Let us start by considering the first equation in \eqref{eq-4.8}. %Let us first consider the \eqref{equation-4.10}.
Formally, denote $U_{\nu+1}(\theta_{\nu+1},r_{\nu+1}-\sum\limits^{\nu}_{i=0}r^{*}_{i})$ as
\begin{equation*}
U_{\nu+1}(\theta_{\nu+1},r_{\nu+1}-\sum\limits^{\nu}_{i=0}r^{*}_{i})=\sum_{0<|k|\leq K_{\nu+1}}U_{k,\nu}e^{{\rm i}\langle k,\theta_{\nu+1}\rangle },
\end{equation*}
taking it into the first equation in \eqref{eq-4.8}, one has %\eqref{equation-4.10}, one has
\begin{equation*}
\sum_{0<|k|\leq K_{\nu+1}}U_{k,\nu+1}e^{{\rm i}\langle k,\theta_{\nu+1}+\omega_{0}(r_{0})\rangle}-\sum_{0<|k|\leq K_{\nu+1}}U_{k,\nu+1}e^{{\rm i}\langle k,\theta_{\nu+1}\rangle}=\sum_{0<|k|\leq K_{\nu+1}}f_{k,\nu}e^{{\rm i}\langle k,\theta_{\nu+1}\rangle}.
\end{equation*}
By comparing the coefficients above, we have
\begin{equation}\label{equation-4.12}
U_{k,\nu+1}(e^{{\rm i}\langle k,\omega_{0}(r_{0})\rangle}-1)=f_{k,\nu}.
\end{equation}
The details of estimating \eqref{equation-4.12} can be seen in the following lemma.

\begin{lemma}

The equation \eqref{equation-4.12} has a unique solution $U_{k,\nu+1}$ on $G(s_{\nu+1})$ satisfying the following estimate
\begin{equation*}
||U_{k,\nu+1}||_{\mathscr{G}_{*}}\leq c_{2}||f_{\nu}||_{\mathscr{D}_{4}}\gamma^{-1}_{0}|k|^{\tau}e^{-|k|(h_{\nu+1}+\frac{3}{4}(h_{\nu}-h_{\nu+1}))}.
\end{equation*}
\end{lemma}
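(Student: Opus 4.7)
Because $\omega_0(r_0)=\omega(r_*)=p$ is a fixed vector independent of $r_{\nu+1}$, the factor $e^{{\rm i}\langle k,\omega_0(r_0)\rangle}-1$ in \eqref{equation-4.12} is a scalar constant; once shown to be nonzero, existence and uniqueness of the solution follow at once from the pointwise formula
\[ U_{k,\nu+1}(\cdot)=\frac{f_{k,\nu}(\cdot)}{e^{{\rm i}\langle k,\omega_0(r_0)\rangle}-1}. \]
The proof therefore reduces to two independent pieces: (i) a small-divisor lower bound on $|e^{{\rm i}\langle k,p\rangle}-1|$ furnished by assumption (A2), and (ii) a Cauchy-type decay estimate for the Fourier coefficient $f_{k,\nu}$ coming from the analyticity of $f_\nu$ on $\mathscr{D}_4$.

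For (i), I would combine the elementary inequality $|e^{{\rm i}\alpha}-1|=2|\sin(\alpha/2)|\geq (2/\pi)\,\mathrm{dist}(\alpha,2\pi\mathbb{Z})$ with (A2): choosing $k_0$ to be the relevant integer approximating $\langle k,p\rangle$, the crude bound $|\langle k,p\rangle|\leq M_0|k|$ places $k_0$ inside the admissible range $|k_0|\leq M_0|k|$ of (A2) after absorbing a harmless constant, and the Diophantine inequality then yields
\[ |e^{{\rm i}\langle k,\omega_0(r_0)\rangle}-1|\geq \frac{c\,\gamma_0}{|k|^{\tau}} \]
with $c>0$ an absolute constant.

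For (ii), since $f_\nu$ is analytic on $\mathscr{D}_4=D(h_{\nu+1}+\tfrac{3}{4}(h_\nu-h_{\nu+1}))\times G(4s_{\nu+1})$ and $\mathscr{G}_*\subset G(4s_{\nu+1})$, the standard contour shift in $\theta$ yields
\[ |f_{k,\nu}|_{\mathscr{G}_*}\leq |f_\nu|_{\mathscr{D}_4}\,e^{-|k|(h_{\nu+1}+\frac{3}{4}(h_\nu-h_{\nu+1}))}. \]
Applying the same shift to the difference quotient $(f_\nu(\theta,r',\varepsilon)-f_\nu(\theta,r'',\varepsilon))/\varpi_1(|r'-r''|)$ produces the analogous bound for the $\varpi_1$-seminorm, so altogether $\|f_{k,\nu}\|_{\mathscr{G}_*}\leq \|f_\nu\|_{\mathscr{D}_4}\,e^{-|k|(h_{\nu+1}+\frac{3}{4}(h_\nu-h_{\nu+1}))}$. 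Dividing by the small-divisor lower bound from (i) delivers the claim with $c_2$ an absolute constant (essentially $\pi/(2c)$).

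This is the textbook homological step of KAM and presents no deep obstacle; the only checkpoint that deserves care is confirming that the integer $k_0$ used to invoke (A2) truly sits inside the admissible range $|k_0|\leq M_0|k|$, since this is precisely the bridge between the arithmetic Diophantine hypothesis and the analytic small divisor $e^{{\rm i}\langle k,p\rangle}-1$ that actually appears in \eqref{equation-4.12}.
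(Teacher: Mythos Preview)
Your proposal is correct and follows essentially the same route as the paper: the paper also divides the argument into the exponential Cauchy estimate $\|f_{k,\nu}\|\leq\|f_\nu\|_{\mathscr{D}_4}e^{-|k|(h_{\nu+1}+\frac34(h_\nu-h_{\nu+1}))}$ and the small-divisor lower bound obtained from $|e^{{\rm i}\alpha}-1|=2|\sin(\alpha/2)|\geq\frac{2}{\pi}|\alpha-k_0|$ combined with (A2), then divides. Your treatment of the $\varpi_1$-seminorm via the same contour shift applied to the difference quotient is in fact a bit more explicit than the paper's, which simply writes the bound in the $\|\cdot\|$-norm without separating the two pieces; and your flag about verifying $|k_0|\leq M_0|k|$ is a valid point that the paper leaves implicit.
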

\begin{proof}We notice that the coefficients $f_{k,\nu}$ decay exponentially, i.e.,
\begin{equation*}
||f_{k,\nu}||_{\mathscr{G}_{**}}\leq ||f_{\nu}||_{\mathscr{D}_{4}}e^{-|k|(h_{\nu+1}+\frac{3}{4}(h_{\nu}-h_{\nu+1}))}.
\end{equation*}
There exists a $k_{0}\in\mathbb{Z}$ satisfying $|\frac{\langle k,\omega_{0}(r_{0})\rangle-k_{0}}{2}|\leq \frac{\pi}{2}$ such that
\begin{align*}
||e^{{\rm i}\langle k,\omega_{0}(r_{0})\rangle}-1||_{\mathscr{G}_{**}}&\geq2||\sin\frac{\langle k,\omega_{0}(r_{0})\rangle-k_{0}}{2}||_{\mathscr{G}_{**}}\notag\\
&\geq\frac{4}{\pi}||\frac{\langle k,\omega_{0}(r_{0})\rangle-k_{0}}{2}||_{\mathscr{G}_{**}}\notag\\
&\geq c||\langle k,\omega_{0}(r_{0})\rangle-k_{0}||_{\mathscr{G}_{**}}\notag\\
&\geq\frac{c\gamma_{0}}{|k|^{\tau}}.
\end{align*}
Therefore,
\begin{align}\label{equation-46}
||U_{k,\nu+1}||_{\mathscr{G}_{*}}&\leq \frac{||f_{k,\nu}||_{\mathscr{G}_{**}}}{||e^{{\rm i}\langle k,\omega_{0}(r_{0})\rangle}-1||_{\mathscr{G}_{**}}}\notag\\
&\leq c_{2}||f_{\nu}||_{\mathscr{D}_{4}}\gamma^{-1}_{0}|k|^{\tau}e^{-|k|(h_{\nu+1}+\frac{3}{4}(h_{\nu}-h_{\nu+1}))}.
\end{align}
\end{proof}
In the same way, we get
\begin{eqnarray*}
V_{k,\nu+1}(e^{{\rm i}\langle k,\omega_{0}(r_{0})\rangle}-1)=g_{k,\nu}.
\end{eqnarray*}
Thus,
\begin{align}\label{equation-47}
||V_{k,\nu+1}||_{\mathscr{G}_{*}}&\leq\frac{||g_{k,\nu}||_{\mathscr{G}_{**}}}{||e^{{\rm i}\langle k,\omega_{0}(r_{0})\rangle}-1||_{\mathscr{G}_{**}}}\notag\\
&\leq c_{2}||g_{\nu}||_{\mathscr{D}_{4}}\gamma^{-1}_{0}|k|^{\tau}e^{-|k|(h_{\nu+1}+\frac{3}{4}(h_{\nu}-h_{\nu+1}))}.
\end{align}

\subsubsection{Translation}\label{subsec-4.2.3}
 In the usual iteration process, one has to find out a  decreasing series of domains that the Diophantine condition fails. To avoid this, we will construct a translation to keep the frequency unchanged in this section. Consider the translation
\begin{equation*}
\mathscr{V}_{\nu+1}:\theta_{\nu+1}\rightarrow \theta_{\nu+1},\quad r_{\nu+1}\rightarrow r_{\nu+1}+r^{*}_{\nu+1}:=\hat{r}_{\nu+1},
\end{equation*}
where $\hat{r}_{\nu+1}\in B_{c\mu_{\nu}}(\hat{r}_{\nu})$. The action has a shift under the translation $ \mathscr{V}_{\nu+1} $, but the angular variable is unchanged.  To make the frequency-preserving, it requests that
\begin{equation*}
\omega_{0}(\hat{r}_{\nu+1})+\sum^{\nu}_{i=0}f_{0,i}(\hat{r}_{\nu+1})-\omega_{0}(r_{0})+\omega_{0}(r_{0})=\omega_{0}(r_{0}),
\end{equation*}
i.e.,
\begin{equation*}
\omega_{0}(\hat{r}_{\nu+1})+\sum^{\nu}_{i=0}f_{0,i}(\hat{r}_{\nu+1})=\omega_{0}(r_{0}).
\end{equation*}
We will demonstrate the equation in the following subsection. After the translation $\mathscr{V}_{\nu+1}$, the mapping $\bar{\mathscr{F}}_{\nu+1}$ becomes $\mathscr{F}_{\nu+1}=\bar{\mathscr{F}}_{\nu+1}\circ\mathscr{V}_{\nu+1}$, i.e.,
\begin{equation*}
\mathscr{F}_{\nu+1}:
\begin{cases}
\theta^{1}_{\nu+1}=\theta_{\nu+1}+\omega_{0}(r_{0})+f_{\nu+1}(\theta_{\nu+1},\hat{r}_{\nu+1},\e),\\
\hat{r}^{1}_{\nu+1}=\hat{r}_{\nu+1}-\sum\limits^{\nu}_{i=0}r^{*}_{i}+g_{\nu+1}(\theta_{\nu+1},\hat{r}_{\nu+1},\e).
\end{cases}
\end{equation*}
The corresponding new perturbations are
\begin{align}\label{equation-4.25}
f_{\nu+1}(\theta_{\nu+1},\hat{r}_{\nu+1},\e)&=f_{\nu}(\theta_{\nu+1}+U_{\nu+1},\hat{r}_{\nu+1}+V_{\nu+1},\e)-f_{\nu}(\theta_{\nu+1},\hat{r}_{\nu+1},\e)\notag\\
&+\mathcal{R}_{K_{\nu+1}}f_{\nu}(\theta_{\nu+1},\hat{r}_{\nu+1},\e)+\mathcal{O}(\varpi_{*}(|V_{\nu+1}|))+\mathcal{O}(\varpi_{1}(|V_{\nu+1}|))\notag\\
&+U_{\nu+1}(\theta_{\nu+1}+\omega_{0}(r_{0}),\hat{r}_{\nu+1}-\sum^{\nu}_{i=0}r^{*}_{i})\notag\\
&-U_{\nu+1}(\theta_{\nu+1}+\omega_{0}(r_{0})+f_{\nu+1},\hat{r}_{\nu+1}-\sum^{\nu}_{i=0}r^{*}_{i}+g_{\nu+1}),
\end{align}
and
\begin{align}\label{equation-4.26}
g_{\nu+1}(\theta_{\nu+1},\hat{r}_{\nu+1},\e)&=g_{\nu}(\theta_{\nu+1}+U_{\nu+1},\hat{r}_{\nu+1}+V_{\nu+1},\e)-g_{\nu}(\theta_{\nu+1},\hat{r}_{\nu+1},\e)\notag\\
&+g_{0,\nu}(\hat{r}_{\nu+1})+\mathcal{R}_{K_{\nu+1}}g_{\nu}(\theta_{\nu+1},\hat{r}_{\nu+1},\e)\notag\\
&+V_{\nu+1}(\theta_{\nu+1}+\omega_{0}(r_{0}),\hat{r}_{\nu+1}-\sum^{\nu}_{i=0}r^{*}_{i})\notag\\
&-V_{\nu+1}(\theta_{\nu+1}+\omega_{0}(r_{0})+f_{\nu+1},\hat{r}_{\nu+1}-\sum^{\nu}_{i=0}r^{*}_{i}+g_{\nu+1}).
\end{align}
\subsubsection{Frequency-preserving}
In this section, we will show that the frequency is unchanged during the iteration process under the conditions ${\rm (A1)}$ and ${\rm (A3)}$. The topological degree condition $({\rm A1})$ states that we can find a $\hat{r}_{\nu+1}$ such that the frequency remains preserved. Besides, the weak convexity condition $({\rm A3})$ ensures that $\{\hat{r}_{\nu}\}$ is a Cauchy sequence. The following lemma is crucial to our consideration.
\begin{lemma}\label{lemma-4.5}Assume that
\begin{equation*}
{(\rm H2)}\qquad||\sum^{\nu}_{i=0}f_{0,i}||_{G(s_{\nu+1})}\leq c\mu^{\frac{1}{2}}_{0}.
\end{equation*}
Then there exists a $\hat{r}_{\nu+1}\in B_{c\mu_{\nu}}(\hat{r}_{\nu})$ such that
\begin{equation}\label{equation-4.27}
\omega_{0}(\hat{r}_{\nu+1})+\sum^{\nu}_{i=0}f_{0,i}(\hat{r}_{\nu+1})=\omega_{0}(r_{0}).
\end{equation}
\end{lemma}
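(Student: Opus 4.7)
The strategy splits into two parts: an existence argument via the topological degree condition (A1), followed by a location bound via the weak convexity (A3). Define $\Phi_\nu(r) := \omega_0(r) + \sum_{i=0}^{\nu} f_{0,i}(r)$, so that \eqref{equation-4.27} reads $\Phi_\nu(\hat r_{\nu+1}) = \omega_0(r_0) = p$. The induction base $\nu = 0$ is immediate from (A2): $\omega_0(r_*) = p$, so one sets $\hat r_0 := r_*$, and the induction hypothesis at level $\nu$ is $\omega_0(\hat r_\nu) + \sum_{i=0}^{\nu-1} f_{0,i}(\hat r_\nu) = p$ together with $\hat r_\nu \in B(r_*, \delta)$.

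For existence, I view $\Phi_\nu$ as a continuous perturbation of $\omega_0$ of size $O(\mu_0^{1/2})$ by hypothesis (H2). The linear homotopy $H(t, r) = \omega_0(r) + t\sum_{i=0}^{\nu} f_{0,i}(r)$, $t \in [0,1]$, is continuous in $(t, r)$, and, provided $\mu_0$ is chosen sufficiently small, $H(t, \cdot)$ avoids $p$ on $\partial E^\circ$ uniformly in $t$. Homotopy invariance of the Brouwer degree together with (A1) then yields $\deg(\Phi_\nu, E^\circ, p) = \deg(\omega_0, E^\circ, p) \neq 0$, producing at least one solution $\hat r_{\nu+1} \in E^\circ$ of $\Phi_\nu(\hat r_{\nu+1}) = p$.

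To locate this solution, I subtract the inductive equation at level $\nu$ from \eqref{equation-4.27}, obtaining
\begin{equation*}
\omega_0(\hat r_{\nu+1}) - \omega_0(\hat r_\nu) = -f_{0,\nu}(\hat r_{\nu+1}) + \sum_{i=0}^{\nu-1}\bigl[f_{0,i}(\hat r_\nu) - f_{0,i}(\hat r_{\nu+1})\bigr].
\end{equation*}
The first term on the right is controlled by $\|f_\nu\|_{\mathcal D_\nu} \leq \gamma_0^{n+m+2} s_\nu^m \mu_\nu$ via \eqref{equation-4.4}; the second, using the $\varpi_1$-continuity (Lipschitz in $r$) of each $f_{0,i}$ inherited from analyticity together with an inductive smallness of $|\hat r_{\nu+1} - \hat r_\nu|$, contributes at most $c\mu_\nu$. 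Applying (A3), valid once it is known that both $\hat r_\nu$ and $\hat r_{\nu+1}$ lie in $B(r_*, \delta)$, I conclude
\begin{equation*}
\varpi_2(|\hat r_{\nu+1} - \hat r_\nu|) \leq c\mu_\nu,
\end{equation*}
so $|\hat r_{\nu+1} - \hat r_\nu| \leq \varpi_2^{-1}(c\mu_\nu)$, which under the implicit identification with the modulus of continuity is exactly the ball $B_{c\mu_\nu}(\hat r_\nu)$ claimed in the statement.

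The main obstacle is to close the induction, namely to keep every iterate $\hat r_k$ inside the weak-convexity ball $B(r_*, \delta)$ so that (A3) remains applicable at each step. Since $\mu_\nu = \mu_0^{(1+\rho)^\nu}$ decays super-geometrically, the series $\sum_\nu \varpi_2^{-1}(c\mu_\nu)$ converges and can be made smaller than $\delta / 2$ by choosing $\mu_0$, and hence $\varepsilon_0$, sufficiently small. This simultaneous induction both guarantees that $\hat r_{\nu+1}$ is a legitimate choice for the next KAM step and shows that $\{\hat r_\nu\}$ is Cauchy, which will feed into the convergence argument at the end of the proof of Theorem \ref{theorem-1}.
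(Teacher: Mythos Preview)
Your overall strategy---degree for existence, subtracting consecutive frequency equations for the location bound---is exactly the paper's approach. However, your treatment of the correction term $\sum_{i=0}^{\nu-1}[f_{0,i}(\hat r_\nu)-f_{0,i}(\hat r_{\nu+1})]$ is circular. You write that this term ``contributes at most $c\mu_\nu$'' thanks to ``an inductive smallness of $|\hat r_{\nu+1}-\hat r_\nu|$'', but that smallness is precisely what you are trying to establish at step $\nu$; nothing in the induction hypothesis bounds $|\hat r_{\nu+1}-\hat r_\nu|$ before the present argument does. Using only the Lipschitz bound $[f_{0,i}]_{\varpi_1}\leq c\mu_i$ and the a~priori containment $\hat r_\nu,\hat r_{\nu+1}\in B(r_*,\delta)$, the correction term is of order $(\sum_i\mu_i)\,|\hat r_{\nu+1}-\hat r_\nu|$, which is $O(\mu_0)$, not $O(\mu_\nu)$.

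The paper avoids this by an \emph{absorption} rather than a direct bound: one writes
\[
|f_{0,\nu}(\hat r_{\nu+1})|\;\geq\;\varpi_2\bigl(|\hat r_{\nu+1}-\hat r_\nu|\bigr)-c\Bigl(\sum_{i=0}^{\nu-1}\mu_i\Bigr)\varpi_1\bigl(|\hat r_{\nu+1}-\hat r_\nu|\bigr)\;\geq\;\tfrac12\,\varpi_2\bigl(|\hat r_{\nu+1}-\hat r_\nu|\bigr),
\]
the last inequality holding because $\varpi_1\leq\varpi_2$ and $c\sum_i\mu_i\leq\tfrac12$ for $\varepsilon$ small. Then $\varpi_2(|\hat r_{\nu+1}-\hat r_\nu|)\leq 2c\mu_\nu$, and a further application of $\varpi_1\leq\varpi_2$ together with $\varpi_1(x)=x$ yields $|\hat r_{\nu+1}-\hat r_\nu|\leq c\mu_\nu$. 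Your final sentence (``under the implicit identification with the modulus of continuity'') gestures at this step but does not carry it out; the comparison $\varpi_1\leq\varpi_2$ is essential and should be invoked explicitly. A related point: you run the degree argument on $E^\circ$, whereas to apply (A3) you need $\hat r_{\nu+1}\in B(r_*,\delta)$ \emph{before} the location estimate; the paper handles this by computing the degree directly on $B(r_0,\delta)$.
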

\begin{proof}
The proof is an induction on $\nu$. Obviously, $\omega_{0}(r_{0})=\omega_{0}(r_{0})$ when $\nu=0$. Now assume that for some $\nu\geq 1$, one has
\begin{equation}\label{equation-4.29}
\omega_{0}(\hat{r}_{j})+\sum^{j-1}_{i=0}f_{0,i}(\hat{r}_{j})=\omega_{0}(r_{0}), \qquad \hat{r}_{j}\in B_{}(\hat{r}_{j-1})\subset B(r_{0},\delta),\  1\leq j\leq \nu.
\end{equation}
We need to find out a $\hat{r}_{\nu+1}$ in the neighborhood of $\hat{r}_{\nu}$ that satisfies
\begin{equation}\label{equation-4.122}
\omega_{0}(\hat{r}_{\nu+1})+\sum^{\nu}_{i=0}f_{0,i}(\hat{r}_{\nu+1})=\omega_{0}(r_{0}).
\end{equation}
Since $\mu_{0}^{\frac{1}{2}}$ is sufficiently small and the condition ${\rm (A1)}$ holds, we have
\begin{equation}\label{DISA1}
\deg\Big(\omega_{0}(\cdot)+\sum^{\nu}_{i=0}f_{0,i}(\cdot),B(r_{0},\delta),\omega_{0}(r_{0})\Big)=\deg\left(\omega_{0}(\cdot),B(r_{0},\delta),\omega_{0}(r_{0})\right)\neq 0,
\end{equation}
where $ \omega_{0}(r_0)=p $, and $ p \in \mathbb{R}^n $ is given in advance. This shows that there exists at least a $\hat{r}_{\nu+1}\in B(r_{0},\delta)$ with some $\delta>0$ such that \eqref{equation-4.27} holds. Remark \ref{remark-1} tells us that there exists a modulus of continuity $\varpi_{1}(x)=x$ such that $f$ is $\varpi_{1}$ continuous about $r$, and following \eqref{equation-4.4}, one has
\begin{equation*}
[f_{0,i}]_{\varpi_{1}}\leq c\mu_{i},\qquad 0\leq i\leq \nu,
\end{equation*}
i.e.,
\begin{equation*}
|f_{0,i}(\hat{r}_{\nu+1})-f_{0,i}(\hat{r}_{\nu})|\leq c\mu_{i}\varpi_{1}(|\hat{r}_{\nu+1}-\hat{r}_{\nu}|),\qquad 0\leq i\leq \nu.
\end{equation*}
Following Definitions \ref{def-1} and \ref{DE2.3}, and together with $({\rm A3})$, one has
\begin{equation*}
\varlimsup_{x\rightarrow 0^{+}}\frac{x}{\varpi_{2}(x)}<+\infty,
\end{equation*}
this means $\varpi_{1}\leq \varpi_{2}$. The equations \eqref{equation-4.29} and \eqref{equation-4.122} imply that
\begin{equation*}
\omega_{0}(\hat{r}_{\nu+1})+\sum^{\nu}_{i=0}f_{0,i}(\hat{r}_{\nu+1})=\omega_{0}(\hat{r}_{\nu})+\sum^{\nu-1}_{i=0}f_{0,i}(\hat{r}_{\nu}).
\end{equation*}
Then
\begin{align*}
|f_{0,\nu}(\hat{r}_{\nu+1})|&=|\omega_{0}(\hat{r}_{\nu})-\omega_{0}(\hat{r}_{\nu+1})+\sum\limits^{\nu-1}_{i=0}(f_{0,i}(\hat{r}_{\nu})-f_{0,i}(\hat{r}_{\nu+1}))|\notag\\
&\geq|\omega_{0}(\hat{r}_{\nu})-\omega_{0}(\hat{r}_{\nu+1})|-\sum\limits^{\nu-1}_{i=0}|f_{0,i}(\hat{r}_{\nu})-f_{0,i}(\hat{r}_{\nu+1})|\notag\\
&\geq\varpi_{2}(|\hat{r}_{\nu}-\hat{r}_{\nu+1}|)-c(\sum\limits^{\nu-1}_{i=0}\mu_{i})\varpi_{1}(|\hat{r}_{\nu}-\hat{r}_{\nu+1}|)\notag\\
&\geq\frac{\varpi_{2}(|\hat{r}_{\nu}-\hat{r}_{\nu+1}|)}{2}.
\end{align*}
The last inequality holds since $\e$ is sufficiently small such that $c(\sum\limits^{\nu-1}_{i=0}\mu_{i})\leq \frac{1}{2}$, and $\varpi_1  \leq \varpi_2$. Therefore,
\begin{equation}\label{equation-4.33}
|\hat{r}_{\nu}-\hat{r}_{\nu+1}|\leq \varpi^{-1}_{2}(2|f_{0,\nu}(\hat{r}_{\nu+1})|)\leq \varpi^{-1}_{2}(2c\mu_{\nu})\leq c\varpi^{-1}_{1}(2c\mu_{\nu})\leq c\mu_{\nu},
\end{equation}
where the last inequality is due to Definition \ref{def-1}, i.e., $\varlimsup\limits_{x\rightarrow 0^{+}}\frac{x}{\varpi_{1}(x)}<+\infty$. This implies that $\{\hat{r}_{\nu}\}$ is a Cauchy sequence and $\hat{r}_{\nu+1}\in B_{c\mu_{\nu}}(\hat{r}_{\nu})$.
\end{proof}
\subsubsection{Estimates on new transformations}
According to \eqref{equation-46} and \eqref{equation-47}, the estimates on transformations are given in the lemma below.
\begin{lemma}\label{lemma-4.6}Assume that there exists a constant $c_{3}$,  and
\begin{equation*}
{(\rm H3)}\qquad c_{3}\gamma^{n+m+1}_{0}s^{m}_{\nu}\mu_{\nu}\Gamma(h_{\nu}-h_{\nu+1})\leq \min\{s_{\nu+1},\frac{h_{\nu}-h_{\nu+1}}{4}\}.
\end{equation*}
Then the followings hold.
\begin{itemize}
\item[$({\rm i})$]$||\mathscr{U}_{\nu+1}-{\rm id}||_{\mathscr{D}_{3}}\leq c_{3}\gamma^{n+m+1}_{0}s^{m}_{\nu}\mu_{\nu}\Gamma(h_{\nu}-h_{\nu+1})$.
\item[$(\rm ii)$] $\mathscr{U}_{\nu+1}:\mathcal{D}_{\nu+1}\rightarrow \mathcal{D}_{\nu}$.
\item[$(\rm iii)$] Set $\mathscr{W}_{\nu+1}:=\mathscr{U}_{\nu+1}\circ\mathscr{V}_{\nu+1}$, one has $\mathscr{W}_{\nu+1}:\mathcal{D}_{\nu+1}\rightarrow \mathcal{D}_{\nu}$, and
\begin{equation*}
||\mathscr{W}_{\nu+1}-{\rm id}||_{\mathcal{D}_{\nu+1}}\leq c_{3}\gamma^{n+m+1}_{0}s^{m}_{\nu}\mu_{\nu}\Gamma(h_{\nu}-h_{\nu+1}).
\end{equation*}
\end{itemize}
\end{lemma}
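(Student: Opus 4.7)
The plan is to prove the three parts in order, first establishing the sup--norm and $\varpi_1$--seminorm bounds on the Fourier series for $U_{\nu+1}$ and $V_{\nu+1}$ coming from the homological equations \eqref{eq-4.8}, then using hypothesis (H3) to read off the mapping property, and finally absorbing the translation $\mathscr{V}_{\nu+1}$ into the estimate with a slightly larger constant.

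For part (i), I would start from the explicit Fourier representation of $U_{\nu+1}$ (and analogously $V_{\nu+1}$),
\[
U_{\nu+1}(\theta,\cdot) \;=\; \sum_{0<|k|\le K_{\nu+1}} U_{k,\nu+1}\, e^{\mathrm{i}\langle k,\theta\rangle},
\]
and use the per-coefficient bound \eqref{equation-46}. On $\mathscr{D}_3$ we have $|\operatorname{Im}\theta|\le h_{\nu+1}+\tfrac12(h_\nu-h_{\nu+1})$, so combining the exponentials produces a factor $e^{-|k|(h_\nu-h_{\nu+1})/4}$, and summing gives exactly $\Gamma(h_\nu-h_{\nu+1})$. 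Together with $\|f_\nu\|_{\mathscr{D}_4}\le\gamma_0^{n+m+2}s_\nu^m\mu_\nu$ from \eqref{equation-4.4}, this yields
\[
|U_{\nu+1}|_{\mathscr{D}_3}\;\le\; c_2\,\gamma_0^{n+m+1}s_\nu^m\mu_\nu\,\Gamma(h_\nu-h_{\nu+1}),
\]
and the same for $V_{\nu+1}$ via \eqref{equation-47}.

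The technical point that I expect to be the main obstacle is the $\varpi_1$--seminorm half of the norm $\|\cdot\|_{\mathscr{D}_3}$. Following the pattern already used for $\mathcal{R}_{K_{\nu+1}}f_\nu$, one first observes that
\[
[f_{k,\nu}]_{\varpi_1}\;\le\; [f_\nu]_{\varpi_1}\,e^{-|k|(h_{\nu+1}+\tfrac34(h_\nu-h_{\nu+1}))},
\]
since the Fourier projection preserves the difference quotient bound. The small-divisor factor $(e^{\mathrm{i}\langle k,\omega_0(r_0)\rangle}-1)^{-1}$ depends only on the fixed frequency $\omega_0(r_0)=p$, not on $r_{\nu+1}$, so it passes cleanly through the difference in $r$; hence $[U_{k,\nu+1}]_{\varpi_1}$ inherits the same $|k|^\tau\gamma_0^{-1}$ divisor as the sup norm. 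Summing $|k|^\tau e^{-|k|(h_\nu-h_{\nu+1})/4}$ over $0<|k|\le K_{\nu+1}$ again bounds by $\Gamma(h_\nu-h_{\nu+1})$, and combined with the sup--norm piece this produces the claimed bound on $\|U_{\nu+1}\|_{\mathscr{D}_3}$; the analogous argument handles $V_{\nu+1}$.

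For part (ii), I would invoke (H3) directly: the bound from (i) is at most $\min\{s_{\nu+1},(h_\nu-h_{\nu+1})/4\}$. Therefore if $(\theta_{\nu+1},r_{\nu+1})\in\mathcal{D}_{\nu+1}=\mathcal{D}(h_{\nu+1},s_{\nu+1})$, then
\[
|\operatorname{Im}(\theta_{\nu+1}+U_{\nu+1})|\le h_{\nu+1}+\tfrac{h_\nu-h_{\nu+1}}{4}<h_\nu,\qquad |\operatorname{Im}(r_{\nu+1}+V_{\nu+1})|\le s_{\nu+1}+s_{\nu+1}=s_\nu,
\]
which gives $\mathscr{U}_{\nu+1}:\mathcal{D}_{\nu+1}\to\mathcal{D}_\nu$. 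For part (iii), the translation $\mathscr{V}_{\nu+1}$ shifts only the action by $r^*_{\nu+1}$ with $|r^*_{\nu+1}|\le c\mu_\nu$ by Lemma \ref{lemma-4.5}. Since $c\mu_\nu$ is dominated by $\gamma_0^{n+m+1}s_\nu^m\mu_\nu\Gamma(h_\nu-h_{\nu+1})$ (absorbing a harmless multiplicative constant into $c_3$), the triangle inequality $\|\mathscr{W}_{\nu+1}-\mathrm{id}\|_{\mathcal{D}_{\nu+1}}\le \|\mathscr{U}_{\nu+1}-\mathrm{id}\|_{\mathscr{D}_3}+|r^*_{\nu+1}|$ gives the stated bound, and the composition $\mathscr{U}_{\nu+1}\circ\mathscr{V}_{\nu+1}$ lands in $\mathcal{D}_\nu$ provided $\mathscr{V}_{\nu+1}$ only slightly enlarges the $s$--radius, which again is covered by (H3) after a cosmetic adjustment of $c_3$.
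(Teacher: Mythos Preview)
Your approach matches the paper's: for (i) you sum the Fourier coefficient bounds \eqref{equation-46}--\eqref{equation-47} against the exponentials on $\mathscr{D}_3$ to produce $\Gamma(h_\nu-h_{\nu+1})$, and for (ii) you feed the resulting estimate straight into (H3). Your explicit treatment of the $\varpi_1$-seminorm in (i) is a helpful elaboration of what the paper leaves implicit in the notation $\|U_{k,\nu+1}\|_{\mathscr{G}_*}$, and your observation that the small divisor depends only on the fixed frequency $\omega_0(r_0)=p$ is exactly the right point.

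There is, however, a small but genuine gap in your (iii). You assert that $|r^*_{\nu+1}|\le c\mu_\nu$ is dominated by $\gamma_0^{n+m+1}s_\nu^m\mu_\nu\,\Gamma(h_\nu-h_{\nu+1})$ after adjusting $c_3$. This fails: $\gamma_0^{n+m+1}$ is small with $\varepsilon$, $s_\nu^m=s_0^m 2^{-\nu m}$ decays, and $\Gamma(h_\nu-h_{\nu+1})\lesssim 2^{\nu\tau}$ cannot compensate once $m>\tau$ (recall $m$ is chosen large while $\tau>n-1$ is fixed). The rough bound $c\mu_\nu$ from Lemma~\ref{lemma-4.5} is therefore not enough. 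What does work is the sharper intermediate estimate in \eqref{equation-4.33}: since $|f_{0,\nu}|\le \gamma_0^{n+m+2}s_\nu^m\mu_\nu$ and, in the setting of Theorem~\ref{theorem-1}, $\varpi_1(x)=x$ together with $\varpi_1\le\varpi_2$ gives $\varpi_2^{-1}(y)\le cy$ for small $y$, one obtains $|r^*_{\nu+1}|\le c\,\gamma_0^{n+m+2}s_\nu^m\mu_\nu$. This is now genuinely dominated by $c_3\gamma_0^{n+m+1}s_\nu^m\mu_\nu\,\Gamma(h_\nu-h_{\nu+1})$ because $\gamma_0\le 1$ and $\Gamma$ is bounded below by a fixed positive constant (already the $|k|=1$ term contributes $e^{-(h_\nu-h_{\nu+1})/4}\ge e^{-h_0/16}$). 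With this correction your triangle-inequality argument goes through, and the paper's terse ``follows immediately'' is justified.
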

\begin{proof}
$(\rm i)$ Since $U_{\nu+1}(\theta_{\nu+1},r_{\nu+1}-\sum\limits^{\nu}_{i=0}r^{*}_{i})=\sum\limits_{0<|k|\leq K_{\nu+1}}U_{k,\nu+1}e^{{\rm i}\langle k,\theta_{\nu+1}\rangle}$, we get
\begin{align}\label{equation-4.36}
||U_{\nu+1}||_{\mathscr{D}_{3}}&\leq ||U_{k,\nu+1}||_{\mathscr{G}_{*}}\sum_{0<|k|\leq K_{\nu+1}}e^{|k|(h_{\nu+1}+\frac{1}{2}(h_{\nu}-h_{\nu+1}))}\notag\\
&\leq||f_{\nu}||_{\mathscr{D}_{4}}\gamma^{-1}_{0}\sum_{0<|k|\leq K_{\nu+1}}|k|^{\tau}e^{-|k|\frac{h_{\nu}-h_{\nu+1}}{4}}\notag\\
&\leq c_{3}\gamma^{n+m+1}_{0}s^{m}_{\nu}\mu_{\nu}\Gamma(h_{\nu}-h_{\nu+1}).
\end{align}
Besides,
\begin{align}\label{equation-4.35}
||V_{\nu+1}||_{\mathscr{D}_{3}}&\leq  ||V_{k,\nu+1}||_{\mathscr{G}_{*}}\sum_{0<|k|\leq K_{\nu+1}}e^{|k|(h_{\nu+1}+\frac{1}{2}(h_{\nu}-h_{\nu+1}))}\notag\\
&\leq||g_{\nu}||_{\mathscr{D}_{4}}\gamma^{-1}_{0}\sum_{0<|k|\leq K_{\nu+1}}|k|^{\tau}e^{-|k|\frac{h_{\nu}-h_{\nu+1}}{4}}\notag\\
&\leq  c_{3}\gamma^{n+m+1}_{0}s^{m}_{\nu}\mu_{\nu}\Gamma(h_{\nu}-h_{\nu+1}).
\end{align}
Thus, $(\rm i)$ is due to \eqref{equation-4.4}, \eqref{equation-4.36}, and \eqref{equation-4.35}.

$(\rm ii)$ By $(\theta_{\nu+1},r_{\nu+1})\in\mathscr{D}_{3}$, $(\rm H3)$ implies that
\begin{align*}
|\theta_{\nu}-\theta_{\nu+1}|&=||U_{\nu+1}||_{\mathscr{D}_{3}}\notag\\
&\leq c_{3}\gamma^{n+m+1}_{0}s^{m}_{\nu}\mu_{\nu}\Gamma(h_{\nu}-h_{\nu+1})\notag\\
&\leq\frac{h_{\nu}-h_{\nu+1}}{4},\\
|r_{\nu}-r_{\nu+1}|&=||V_{\nu+1}||_{\mathscr{D}_{3}}\notag\\
&\leq c_{3}\gamma^{n+m+1}_{0}s^{m}_{\nu}\mu_{\nu}\Gamma(h_{\nu}-h_{\nu+1})\notag\\
&\leq s_{\nu+1}.
\end{align*}
Thus, $\mathscr{U}_{\nu+1}:\mathcal{D}_{\nu+1}\subset\mathscr{D}_{4}\rightarrow \mathscr{D}_{3}\subset\mathcal{D}_{\nu}$.

$(\rm iii)$ now follows from $(\rm i)$ and $(\rm ii)$ immediately.
\end{proof}
\subsubsection{Estimates on the new perturbations}
In what follows, we are able to show the estimates on the new perturbations.
\begin{lemma}\label{lemma-4.7}Assume that
\begin{equation*}
{(\rm H4) } \qquad\gamma^{n+m+1}_{0}s^{m}_{\nu}\mu_{\nu}\Gamma(h_{\nu}-h_{\nu+1})\leq \varpi^{-1}_{*}(\gamma^{n+m+1}_{0}s^{m}_{\nu}\mu^{2}_{\nu}).
\end{equation*}
Then there exists a constant $c_{4}$ such that
\begin{align*}
&||\bar{f}_{\nu+1}||_{\mathcal{D}_{\nu+1}}+||\bar{g}_{\nu+1}||_{\mathcal{D}_{\nu+1}}\\
\leq& c_{4}\gamma^{n+m+2}_{0}s^{m}_{\nu}\mu^{2}_{\nu}(\gamma^{n+m+1}_{0}s^{m}_{\nu}(h_{\nu+1}-h_{\nu+2})^{-1}\Gamma(h_{\nu}-h_{\nu+1})+\gamma^{n+m+1}_{0}s^{m-1}_{\nu}\Gamma(h_{\nu}-h_{\nu+1})+1).
\end{align*}
Moreover, if
\begin{equation*}
{(\rm H5)}\qquad 2^{m}c_{4}\mu^{1-\rho}_{\nu}(\gamma^{n+m+1}_{0}s^{m}_{\nu}(h_{\nu+1}-h_{\nu+2})^{-1}\Gamma(h_{\nu}-h_{\nu+1})+\gamma^{n+m+1}_{0}s^{m-1}_{\nu}\Gamma(h_{\nu}-h_{\nu+1})+1)\leq 1,
\end{equation*}
then
\begin{equation*}
||f_{\nu+1}||_{\mathcal{D}_{\nu+1}}+||g_{\nu+1}||_{\mathcal{D}_{\nu+1}}\leq \gamma^{n+m+2}_{0}s^{m}_{\nu+1}\mu_{\nu+1}.
\end{equation*}
\end{lemma}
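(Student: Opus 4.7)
The strategy is to estimate on $\mathcal{D}_{\nu+1}$ each of the four types of terms appearing in \eqref{eq-4.10}--\eqref{eq-4.11}, and then pull back by the translation $\mathscr{V}_{\nu+1}$ to obtain $f_{\nu+1}$, $g_{\nu+1}$. I would use Lemma \ref{lemma-4.6} to control $(U_{\nu+1},V_{\nu+1})$ and to guarantee that all compositions remain inside $\mathcal{D}_\nu$; Cauchy's inequality on the intermediate strips $\mathscr{D}_1\subset\mathscr{D}_2\subset\mathscr{D}_3\subset\mathscr{D}_4$ then converts analyticity into the derivative bounds needed throughout the proof.

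First, for the increment terms $f_\nu(\theta+U,r+V)-f_\nu(\theta,r)$ and its $g$-analogue, I apply the mean value inequality together with the Cauchy estimates
\[ \|\partial_\theta f_\nu\|_{\hat{\mathcal{D}}_{\nu+1}}\lesssim \frac{\|f_\nu\|_{\mathscr{D}_4}}{h_{\nu+1}-h_{\nu+2}},\qquad \|\partial_r f_\nu\|_{\hat{\mathcal{D}}_{\nu+1}}\lesssim \frac{\|f_\nu\|_{\mathscr{D}_4}}{s_\nu}, \]
combined with $\max(\|U_{\nu+1}\|,\|V_{\nu+1}\|)\leq c_3\gamma_0^{n+m+1}s_\nu^m\mu_\nu\Gamma(h_\nu-h_{\nu+1})$ from Lemma \ref{lemma-4.6}. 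These two pieces generate precisely the terms $\gamma_0^{n+m+1}s_\nu^m(h_{\nu+1}-h_{\nu+2})^{-1}\Gamma$ and $\gamma_0^{n+m+1}s_\nu^{m-1}\Gamma$, each multiplied by the small factor $\gamma_0^{n+m+2}s_\nu^m\mu_\nu^2$. The already-estimated remainders $\mathcal{R}_{K_{\nu+1}}f_\nu$, $\mathcal{R}_{K_{\nu+1}}g_\nu$ contribute the ``$+1$'' in the bracket, while the modulus-of-continuity terms are controlled by hypothesis $({\rm H4})$: since $\varpi_*$ is monotone and $\varpi_1\leq \varpi_*$, one has $\varpi_*(|V_{\nu+1}|)\leq \varpi_*\bigl(c_3\gamma_0^{n+m+1}s_\nu^m\mu_\nu\Gamma\bigr)\leq c\,\gamma_0^{n+m+1}s_\nu^m\mu_\nu^2$, and similarly for $\varpi_1$.

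Second, the ``shift'' terms $U_{\nu+1}(\theta+\omega_0,r-\textstyle\sum r_i^*)-U_{\nu+1}(\theta+\omega_0+\bar f_{\nu+1},r-\sum r_i^*+\bar g_{\nu+1})$ and its $V$-analogue are again estimated by the mean value inequality, with Cauchy bounds on $\partial_\theta U_{\nu+1}$, $\partial_r U_{\nu+1}$ derived from \eqref{equation-46}--\eqref{equation-47}. Because the right-hand side of \eqref{eq-4.10}--\eqref{eq-4.11} contains $\bar f_{\nu+1},\bar g_{\nu+1}$ inside the arguments of $U_{\nu+1}$ and $V_{\nu+1}$, the estimate is implicit; I would close it by observing that the prefactor in front of $\|\bar f_{\nu+1}\|+\|\bar g_{\nu+1}\|$ is $\mathcal{O}(\mu_\nu)$, so these contributions can be absorbed into the left-hand side to yield the first stated bound on $\|\bar f_{\nu+1}\|+\|\bar g_{\nu+1}\|$.

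Third, $\mathscr{V}_{\nu+1}$ is an action shift by $r_{\nu+1}^*\in B_{c\mu_\nu}(0)$ (Lemma \ref{lemma-4.5}), so composition with it preserves the norm estimates up to restriction to $\mathcal{D}_{\nu+1}$, giving the same bound for $\|f_{\nu+1}\|+\|g_{\nu+1}\|$. Using $s_{\nu+1}=s_\nu/2$ and $\mu_{\nu+1}=\mu_\nu^{1+\rho}$, hypothesis $({\rm H5})$ rewrites the outer factor $c_4(\cdots)$ as $\mu_\nu^{-\rho}\cdot 2^{-m}$ times a quantity bounded by $1$, converting $\gamma_0^{n+m+2}s_\nu^m\mu_\nu^2$ into $\gamma_0^{n+m+2}s_{\nu+1}^m\mu_{\nu+1}$ and closing the induction \eqref{equation-4.4} at level $\nu+1$. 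The main obstacle is precisely the self-referential structure of \eqref{eq-4.10}--\eqref{eq-4.11}: $\bar f_{\nu+1}$ sits inside $U_{\nu+1}$ in its own defining equation, so one must simultaneously control all four composition points and keep them within the analytic strips on which Cauchy's inequality was applied. This forces the nested choice of domains $\mathscr{D}_i$ and produces the particular combination of derivative-loss factors $(h_{\nu+1}-h_{\nu+2})^{-1}$ and $s_\nu^{-1}$ visible in the final bound.
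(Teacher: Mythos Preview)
Your outline matches the paper for $\bar f_{\nu+1}$ and for all the ``quadratic'' pieces, but it has a genuine gap for $\bar g_{\nu+1}$. Equation~\eqref{eq-4.11} contains, in addition to the four types of terms you list, the zero-mode $g_{0,\nu}(r_{\nu+1})$. This term is only of size $\|g_\nu\|_{\mathcal D_\nu}\le \gamma_0^{n+m+2}s_\nu^m\mu_\nu$, i.e.\ one factor of $\mu_\nu$, not two; left as is, it would dominate everything else and the bound $c_4\gamma_0^{n+m+2}s_\nu^m\mu_\nu^2(\cdots)$ would fail. In~\eqref{eq-4.10} this issue does not arise because $f_{0,\nu}$ was moved into the frequency equation (Lemma~\ref{lemma-4.5}), but there is no analogous mechanism available for $g_{0,\nu}$.

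The paper resolves this via the \emph{intersection property}: it implies that for each $r_{\nu+1}^0$ there is a $\theta_{\nu+1}^0$ with $\bar g_{\nu+1}(\theta_{\nu+1}^0,r_{\nu+1}^0,\varepsilon)=0$, so $\sup_\theta\|\bar g_{\nu+1}\|=\operatorname{osc}_\theta\bar g_{\nu+1}=\operatorname{osc}_\theta(\bar g_{\nu+1}-\bar h)$ for any $\bar h=\bar h(r_{\nu+1})$. Choosing $\bar h=g_{0,\nu}$ gives $\tfrac12\|\bar g_{\nu+1}\|_{\mathcal D_{\nu+1}}\le\|\bar g_{\nu+1}-g_{0,\nu}\|_{\mathcal D_{\nu+1}}$, and the right-hand side now consists only of the four ``quadratic'' types you analyzed. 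This oscillation argument is the one essential step missing from your plan; everything else you wrote (Cauchy estimates on $\partial_\theta f_\nu,\partial_r f_\nu$ over $\hat{\mathcal D}_{\nu+1}$, Lemma~\ref{lemma-4.6} for $U_{\nu+1},V_{\nu+1}$, (H4) for the $\varpi_*$ term, the implicit/absorption argument for the self-referential shift terms, and (H5) to pass from $s_\nu^m\mu_\nu^2$ to $s_{\nu+1}^m\mu_{\nu+1}$) coincides with the paper's proof.
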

\begin{proof}
Note that $\bar{f}_{\nu+1}$ and $\bar{g}_{\nu+1}$ are solved by the implicit function theorem from \eqref{eq-4.10} and \eqref{eq-4.11}.
Thus,
\begin{align}\label{equation-4.38}
||\bar{f}_{\nu+1}||_{\mathcal{D}_{\nu+1}}&\leq c ||\partial_{\theta_{\nu+1}}f_{\nu}||_{\hat{\mathcal{D}}_{\nu+1}}||U_{\nu+1}||_{\mathcal{D}_{\nu+1}}+c||\partial_{r_{\nu+1}}f_{\nu}||_{\hat{\mathcal{D}}_{\nu+1}}||V_{\nu+1}||_{\mathcal{D}_{\nu+1}}\notag\\
&+c||\mathcal{R}_{K_{\nu+1}}f_{\nu}||_{\mathcal{D}_{\nu+1}}+c\varpi_{*}(|V_{\nu+1}|),
\end{align}
where $\varpi_{1}(|V_{\nu+1}|)\leq \varpi_{*}(|V_{\nu+1}|)$ is due to $\varpi_{1}\leq \varpi_{*}$.

The intersection property implies that there exists a $\theta^{0}_{\nu+1}$ such that for each $r^{0}_{\nu+1}\in G(s_{\nu+1})$, one has
\begin{equation*}
\bar{g}_{\nu+1}(\theta^{0}_{\nu+1},r^{0}_{\nu+1},\e)=0,
\end{equation*}
i.e.,
\begin{align*}
&\sup_{\theta_{\nu+1}\in D(h_{\nu+1})}||\bar{g}_{\nu+1}(\theta_{\nu+1},r_{\nu+1},\e)||\\
=&\sup_{\theta_{\nu+1}\in D(h_{\nu+1})}||\bar{g}_{\nu+1}(\theta_{\nu+1},r_{\nu+1},\e)-\bar{g}_{\nu+1}(\theta^{0}_{\nu+1},r^{0}_{\nu+1},\e)||\\
=&\mathop{{\rm osc}}_{\theta_{\nu+1}\in D(h_{\nu+1})}\bar{g}_{\nu+1}(\theta_{\nu+1},r_{\nu+1},\e)\\
=&\mathop{{\rm osc}}_{\theta_{\nu+1}\in D(h_{\nu+1})}(\bar{g}_{\nu+1}(\theta_{\nu+1},r_{\nu+1},\e)-\bar{h}),
\end{align*}
where $\bar{h}$ is a function of $r_{\nu+1}$. Specially, taking $\bar{h}=g_{0,\nu}(r_{\nu+1})$, one has
\begin{equation*}
\frac{1}{2}||\bar{g}_{\nu+1}||_{\mathcal{D}_{\nu+1}}\leq ||\bar{g}_{\nu+1}-g_{0,\nu}||_{\mathcal{D}_{\nu+1}}.
\end{equation*}
Therefore,
\begin{align}\label{equation-4.40}
||\bar{g}_{\nu+1}||_{\mathcal{D}_{\nu+1}}&\leq c||\bar{g}_{\nu+1}-g_{0,\nu}||_{\mathcal{D}_{\nu+1}}\notag\\
&\leq c||\partial_{\theta_{\nu+1}}g_{\nu}||_{\hat{\mathcal{D}}_{\nu+1}}||U_{\nu+1}||_{\mathcal{D}_{\nu+1}}+c||\partial_{r_{\nu+1}}g_{\nu}||_{\hat{\mathcal{D}}_{\nu+1}}||V_{\nu+1}||_{\mathcal{D}_{\nu+1}}\notag\\
&+c||\mathcal{R}_{K_{\nu+1}}g_{\nu}||_{\mathcal{D}_{\nu+1}}.
\end{align}
Following \eqref{equation-4.38}, \eqref{equation-4.40}, $({\rm H4})$, and estimates obtained earlier, we have
\begin{align*}
||\bar{f}_{\nu+1}||_{\mathcal{D}_{\nu+1}}+||\bar{g}_{\nu+1}||_{\mathcal{D}_{\nu+1}}&\leq c||\partial_{\theta_{\nu+1}}f_{\nu}||_{\hat{\mathcal{D}}_{\nu+1}}||U_{\nu+1}||_{\mathcal{D}_{\nu+1}}+c||\partial_{\theta_{\nu+1}}g_{\nu}||_{\hat{\mathcal{D}}_{\nu+1}}||U_{\nu+1}||_{\mathcal{D}_{\nu+1}}\notag\\
&+ c||\partial_{r_{\nu+1}}f_{\nu}||_{\hat{\mathcal{D}}_{\nu+1}}||V_{\nu+1}||_{\mathcal{D}_{\nu+1}}+c||\partial_{r_{\nu+1}}g_{\nu}||_{\hat{\mathcal{D}}_{\nu+1}}||V_{\nu+1}||_{\mathcal{D}_{\nu+1}}\notag\\
&+ ||\mathcal{R}_{K_{\nu+1}}f_{\nu}||_{\mathcal{D}_{\nu+1}}+c||\mathcal{R}_{K_{\nu+1}}g_{\nu}||_{\mathcal{D}_{\nu+1}}+c\varpi_{*}(|V_{\nu+1}|)\notag\\
&\leq c\frac{\gamma^{n+m+2}_{0}s^{m}_{\nu}\mu_{\nu}}{h_{\nu+1}-h_{\nu+2}}\cdot\gamma^{n+m+1}_{0}s^{m}_{\nu}\mu_{\nu}\Gamma(h_{\nu}-h_{\nu+1})\notag\\
&+ c\frac{\gamma^{n+m+2}_{0}s^{m}_{\nu}\mu_{\nu}}{s_{\nu+1}-s_{\nu+2}}\cdot\gamma^{n+m+1}_{0}s^{m}_{\nu}\mu_{\nu}\Gamma(h_{\nu}-h_{\nu+1})\notag\\
&+ c\gamma^{n+m+2}_{0}s^{m}_{\nu}\mu^{2}_{\nu}\notag\\
&\leq c_{4}\gamma^{n+m+2}_{0}s^{m}_{\nu}\mu^{2}_{\nu}\cdot\gamma^{n+m+1}_{0}s^{m}_{\nu}(h_{\nu+1}-h_{\nu+2})^{-1}\Gamma(h_{\nu}-h_{\nu+1})\\
&+ c_{4}\gamma^{n+m+2}_{0}s^{m}_{\nu}\mu^{2}_{\nu}\cdot\gamma^{n+m+1}_{0}s^{m-1}_{\nu}\Gamma(h_{\nu}-h_{\nu+1})\\
&+ c_{4}\gamma^{n+m+2}_{0}s^{m}_{\nu}\mu^{2}_{\nu}.
\end{align*}
 Finally, $(\rm H5)$ implies that
 \begin{equation*}
 ||f_{\nu+1}||_{\mathcal{D}_{\nu+1}}+||g_{\nu+1}||_{\mathcal{D}_{\nu+1}}\leq \gamma^{n+m+2}_{0}s^{m}_{\nu+1}\mu_{\nu+1}.
 \end{equation*}
 \end{proof}
 \subsection{The preservation of intersection property}
 In the previous Section \ref{subsec-4.2.3}, we have constructed a translation $\mathscr{V}_{\nu+1}$ such that the frequency $\omega_{0}(r_{0})$ unchanged. The translation $\mathscr{V}_{\nu+1}$ truns $\bar{\mathscr{F}}_{\nu+1}$ into $\mathscr{F}_{\nu+1}=\bar{\mathscr{F}}_{\nu+1}\circ\mathscr{V}_{\nu+1}$ but drops the intersection property. For this purpose, we  construct the conjugation of $ \bar {\mathscr{F}}_{\nu+1} $ such that it has the same properties as $ \bar {\mathscr{F}}_{\nu+1} $. Denote by $\hat{\mathscr{F}}_{\nu+1}$ the conjugation of $\bar{\mathscr{F}}_{\nu+1}$, that is, $\hat{\mathscr{F}}_{\nu+1}=\mathscr{V}^{-1}_{\nu+1}\circ\bar{\mathscr{F}}_{\nu+1}\circ\mathscr{V}_{\nu+1}$, where
 \begin{equation*}
 \mathscr{V}^{-1}_{\nu+1}: \theta^{1}_{\nu+1}\rightarrow \theta^{1}_{\nu+1},\quad \hat{r}^{1}_{\nu+1}\rightarrow \hat{r}^{1}_{\nu+1}-r^{*}_{\nu+1}.
 \end{equation*}
 Therefore, $\hat{\mathscr{F}}_{\nu+1}$ has the form
\begin{equation*}\hat{\mathscr{F}}_{\nu+1}:
\begin{cases}
\theta^{1}_{\nu+1}=\theta_{\nu+1}+\omega_{0}(r_{0})+f_{\nu+1}(\theta_{\nu+1},\hat{r}_{\nu+1},\e),\\
\hat{r}^{1}_{\nu+1}=\hat{r}_{\nu+1}-\sum\limits^{\nu+1}_{i=0}r^{*}_{i}+g_{\nu+1}(\theta_{\nu+1},\hat{r}_{\nu+1},\e).
\end{cases}
\end{equation*}
It ensures that the mapping $\hat{\mathscr{F}}_{\nu+1}$ still has the intersection property.

\section{Proof of the main results}\label{SEC5}
In this section, we will show the proof of Theorem \ref{theorem-1}, Corollary \ref{cor-1} and Theorem  \ref{theorem-2} successively.
\subsection{Proof of Theorem \ref{theorem-1}}
\setcounter{equation}{0}
\subsubsection{Iteration lemma}
The iteration lemma guarantees the inductive construction of the transformations in all KAM steps. Let $s_{0}$, $h_{0}$, $\gamma_{0}$, $\mu_{0}$, $\mathscr{F}_{0}$, $\mathcal{D}_{0}$ be given in Section \ref{section-2}, and set $K_{0}=0$, $r^{*}_{0}=0$, $0<\rho<1$ is a constant. We define the following sequences inductively for all $\nu=0,1,2,\cdots$.
\begin{align*}
h_{\nu+1}&=\frac{h_{\nu}}{2}+\frac{h_{0}}{4},\\
s_{\nu+1}&=\frac{s_{\nu}}{2},\\
\mu_{\nu+1}&=\mu^{1+\rho}_{\nu},\\
K_{\nu+1}&=([\log\frac{1}{\mu_{\nu}}]+1)^{3\eta},\\
\mathcal{D}_{\nu+1}&=\mathcal{D}(h_{\nu+1},s_{\nu+1}),\\
\hat{\mathcal{D}}_{\nu+1}&=\mathcal{D}(h_{\nu+2}+\frac{3}{4}(h_{\nu+1}-h_{\nu+2}),s_{\nu+2}),\\
\Gamma(h_{\nu}-h_{\nu+1})&=\sum_{0<|k|\leq K_{\nu+1}}|k|^{\tau}e^{-|k|\frac{h_{\nu}-h_{\nu+1}}{4}}\leq \frac{4^{\tau}\tau!}{(h_{\nu}-h_{\nu+1})^{\tau}}.
\end{align*}
\begin{lemma}
Consider mapping \eqref{equation-1} for $\nu=0,1,2,\cdots$. If $\e_{0}$ is sufficiently small such that $(\rm H1)-(\rm \rm H5)$ hold, and
\begin{equation*}
||f_{\nu}||_{\mathcal{D}_{\nu}}+||g_{\nu}||_{\mathcal{D}_{\nu}}\leq \gamma^{n+m+2}_{0}s^{m}_{\nu}\mu_{\nu},
\end{equation*}
then the iteration process described above is valid, and the following properties hold.
\begin{itemize}
\item[$(\rm i)$]
There exists a real analytic transformation  $\mathscr{W}_{\nu+1}:=\mathscr{U}_{\nu+1}\circ\mathscr{V}_{\nu+1}$ that satisfies $\mathscr{W}_{\nu+1}\circ\hat{\mathscr{F}}_{\nu+1}=\hat{\mathscr{F}}_{\nu}\circ\mathscr{W}_{\nu+1}$, where
\begin{equation*}\hat{\mathscr{F}}_{\nu+1}:
\begin{cases}
\theta^{1}_{\nu+1}=\theta_{\nu+1}+\omega_{0}(r_{0})+f_{\nu+1}(\theta_{\nu+1},\hat{r}_{\nu+1},\e),\\
\hat{r}^{1}_{\nu+1}=\hat{r}_{\nu+1}-\sum\limits^{\nu+1}_{i=0}r^{*}_{i}+g_{\nu+1}(\theta_{\nu+1},\hat{r}_{\nu+1},\e).
\end{cases}
\end{equation*}

Also, the transformation $\mathscr{W}_{\nu+1}$ has the estimate
\begin{equation}\label{equation-51}
||\mathscr{W}_{\nu+1}-{\rm id}||_{\mathcal{D}_{\nu+1}}\leq c_{3}\gamma^{n+m+1}_{0}s^{m}_{\nu}\mu_{\nu}\Gamma(h_{\nu}-h_{\nu+1}).
\end{equation}
\item[$(\rm ii)$] $\{\hat{r}_{\nu}\}$ is a Cauchy sequence and
\begin{equation*}\label{equation-5.1}
|\hat{r}_{\nu+1}-\hat{r}_{\nu}|\leq c\mu_{\nu}.
\end{equation*}
\item[$(\rm iii)$] The estimate on new perturbations is
\begin{equation*}\label{equation-5.2}
||f_{\nu+1}||_{\mathcal{D}_{\nu+1}}+||g_{\nu+1}||_{\mathcal{D}_{\nu+1}}\leq \gamma^{n+m+2}_{0}s^{m}_{\nu+1}\mu_{\nu+1}.
\end{equation*}
\end{itemize}
\end{lemma}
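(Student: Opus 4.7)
The plan is to prove the iteration lemma by induction on $\nu$. The base case $\nu=0$ is exactly the setup in Section \ref{section-2}, where the smallness assumption on $\e_{0}$ guarantees $\|f_{0}\|_{\mathcal{D}_{0}}+\|g_{0}\|_{\mathcal{D}_{0}}\leq \gamma_{0}^{n+m+2}s_{0}^{m}\mu_{0}$ and the empty translation $r_{0}^{*}=0$ trivially satisfies the frequency equation $\omega_{0}(r_{0})=\omega(r_{*})=p$. For the inductive step, the task is to show that carrying out one cycle of the KAM machinery assembled in Section \ref{SEC4} produces $\mathscr{W}_{\nu+1}$, $\hat{r}_{\nu+1}$, and $\hat{\mathscr{F}}_{\nu+1}$ with the stated properties.

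First I would invoke the truncation lemma under $({\rm H1})$ to bound $\|\mathcal{R}_{K_{\nu+1}}f_{\nu}\|_{\mathscr{D}_{3}}$ and $\|\mathcal{R}_{K_{\nu+1}}g_{\nu}\|_{\mathscr{D}_{3}}$ by $c_{1}\gamma_{0}^{n+m+2}s_{\nu}^{m}\mu_{\nu}^{2}$. Next, I would solve the homological equations \eqref{eq-4.8} Fourier-coefficient-wise, using the Diophantine condition $({\rm A2})$ on the frozen frequency $\omega_{0}(r_{0})=p$ to produce $U_{\nu+1}$ and $V_{\nu+1}$ with the bounds from \eqref{equation-46}, \eqref{equation-47}. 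Then Lemma \ref{lemma-4.6}, under $({\rm H3})$, yields $\mathscr{U}_{\nu+1}:\mathcal{D}_{\nu+1}\rightarrow \mathcal{D}_{\nu}$ together with the norm bound in \eqref{equation-51}.

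Next I would invoke Lemma \ref{lemma-4.5} under $({\rm H2})$: the topological degree condition $({\rm A1})$ is stable under the small KAM perturbation, so \eqref{DISA1} produces $\hat r_{\nu+1}\in B_{c\mu_{\nu}}(\hat r_{\nu})$ satisfying \eqref{equation-4.122}, which defines $\mathscr{V}_{\nu+1}$. Combined with the weak convexity condition $({\rm A3})$ and Definition \ref{def-1}, estimate \eqref{equation-4.33} gives both $|\hat r_{\nu+1}-\hat r_{\nu}|\leq c\mu_{\nu}$ and the Cauchy property, establishing (ii). I then set $\mathscr{W}_{\nu+1}:=\mathscr{U}_{\nu+1}\circ\mathscr{V}_{\nu+1}$ and define $\hat{\mathscr{F}}_{\nu+1}:=\mathscr{V}_{\nu+1}^{-1}\circ\bar{\mathscr{F}}_{\nu+1}\circ\mathscr{V}_{\nu+1}$; the conjugation identity follows by composing \eqref{equation-4.2} on the right with $\mathscr{V}_{\nu+1}$, and the explicit action form of $\hat{\mathscr{F}}_{\nu+1}$ is read off from the construction at the end of Section \ref{SEC4}. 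The norm bound \eqref{equation-51} transfers to $\mathscr{W}_{\nu+1}$ since $\mathscr{V}_{\nu+1}$ is a translation by $r_{\nu+1}^{*}$ of size $O(\mu_{\nu})$, which is absorbed into the right-hand side.

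Finally, I would apply Lemma \ref{lemma-4.7} under $({\rm H4})$ and $({\rm H5})$ to the new perturbations \eqref{equation-4.25}, \eqref{equation-4.26}, producing the bound $\|f_{\nu+1}\|_{\mathcal{D}_{\nu+1}}+\|g_{\nu+1}\|_{\mathcal{D}_{\nu+1}}\leq \gamma_{0}^{n+m+2}s_{\nu+1}^{m}\mu_{\nu+1}$, which both closes the induction and establishes (iii). The main obstacle, as I see it, is the interplay around the translation $\mathscr{V}_{\nu+1}$: one must verify simultaneously that the prescribed non-zero Brouwer degree persists under the accumulated perturbation $\sum_{i=0}^{\nu}f_{0,i}$ so that $\hat r_{\nu+1}$ exists in $B(r_{0},\delta)$, that the weak convexity $({\rm A3})$ forces geometric decay of $|\hat r_{\nu+1}-\hat r_{\nu}|$ despite $\varpi_{2}$ being only a modulus of continuity, and that the conjugation $\hat{\mathscr{F}}_{\nu+1}=\mathscr{V}_{\nu+1}^{-1}\circ\bar{\mathscr{F}}_{\nu+1}\circ\mathscr{V}_{\nu+1}$ restores the intersection property needed to run Lemma \ref{lemma-4.7} at the next step; these three points are the substantive content and are precisely what the work in Sections \ref{subsec-4.2.3}--4.3 was arranged to secure.
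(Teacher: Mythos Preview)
Your proposal is correct and follows essentially the same approach as the paper: an induction on $\nu$ that assembles the one-step construction from Section~\ref{SEC4}, invoking Lemma~\ref{lemma-4.6} for (i), the estimate \eqref{equation-4.33} from Lemma~\ref{lemma-4.5} for (ii), and Lemma~\ref{lemma-4.7} for (iii). The only small wrinkle is the conjugacy identity in (i): composing \eqref{equation-4.2} on the right with $\mathscr{V}_{\nu+1}$ yields $\mathscr{W}_{\nu+1}\circ\hat{\mathscr{F}}_{\nu+1}=\mathscr{F}_{\nu}\circ\mathscr{W}_{\nu+1}$, and you then need the identification $\mathscr{F}_{\nu}=\hat{\mathscr{F}}_{\nu}$ (the map at the start of step $\nu$ in Section~4.2 is exactly the hatted map produced at the end of step $\nu-1$), which the paper writes as $\bar{\mathscr{F}}_{\nu+1}=\mathscr{U}_{\nu+1}^{-1}\circ\hat{\mathscr{F}}_{\nu}\circ\mathscr{U}_{\nu+1}$.
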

\begin{proof}
The proof is an induction on $\nu$. It is easy to see that we can take sufficiently small $\e_{0}$ to ensure that $(\rm H1)-(\rm H5)$ hold. Since
\begin{align*}
\hat{\mathscr{F}}_{\nu+1}&=\mathscr{V}^{-1}_{\nu+1}\circ\bar{\mathscr{F}}_{\nu+1}\circ\mathscr{V}_{\nu+1}\notag\\
&=\mathscr{V}^{-1}_{\nu+1}\circ(\mathscr{U}^{-1}_{\nu+1}\circ\hat{\mathscr{F}}_{\nu}\circ\mathscr{U}_{\nu+1})\circ\mathscr{V}_{\nu+1},
\end{align*}
and $\mathscr{W}_{\nu+1}=\mathscr{U}_{\nu+1}\circ\mathscr{V}_{\nu+1}$, we have that $\mathscr{W}_{\nu+1}\circ\hat{\mathscr{F}}_{\nu+1}=\hat{\mathscr{F}}_{\nu}\circ\mathscr{W}_{\nu+1}$. For the estimate \eqref{equation-5.1} in $(\rm i)$, see Lemma \ref{lemma-4.6}. In addition, we notice that $(\rm ii)$ is due to \eqref{equation-4.33}, and $({\rm iii})$ follows from Lemma \ref{lemma-4.7}.
\end{proof}
\subsubsection{Convergence}
Observe that
\begin{align}\label{equation-55}
\hat{\mathscr{F}}_{\nu+1}&=\mathscr{W}^{-1}_{\nu+1}\circ\hat{\mathscr{F}}_{\nu}\circ\mathscr{W}_{\nu+1}\notag\\
&=\mathscr{W}^{-1}_{\nu+1}\circ\mathscr{W}^{-1}_{\nu}\circ\hat{\mathscr{F}}_{\nu-1}\circ\mathscr{W}_{\nu}\circ\mathscr{W}_{\nu+1}\notag\\
&=\cdots\notag\\
&=\mathscr{W}^{-1}_{\nu+1}\circ\cdots\circ\mathscr{W}^{-1}_{1}\circ\mathscr{F}_{0}\circ\mathscr{W}_{1}\circ\cdots\circ\mathscr{W}_{\nu+1}.
\end{align}
 Denote
\begin{align*}
 \mathscr{W}^{\nu+1} &:= \mathscr{W}_{1}\circ\mathscr{W}_{2}\circ\cdots\circ\mathscr{W}_{\nu+1},
 \end{align*}
 then \eqref{equation-55} implies that
   \begin{equation*}
  \mathscr{W}^{\nu+1}\circ\hat{\mathscr{F}}_{\nu+1}=\mathscr{F}_{0}\circ\mathscr{W}^{\nu+1}.
  \end{equation*}
  The transformation $\mathscr{W}^{\nu+1}$ is convergent since
  \begin{align}
|| \mathscr{W}^{\nu+1}-\mathscr{W}^{\nu}||_{\mathcal{D}_{\nu+1}}&=||\mathscr{W}_{1}\circ\cdots\circ\mathscr{W}_{\nu}\circ\mathscr{W}_{\nu+1}-\mathscr{W}_{1}\circ\cdots\circ\mathscr{W}_{\nu}||_{\mathcal{D}_{\nu+1}}\notag\\
&\leq||\mathscr{W}^{\nu}||_{\mathcal{D}_{\nu}}||\mathscr{W}_{\nu+1}-{\rm id}||_{\mathcal{D}_{\nu+1}}\notag\\
&\leq\prod\limits^{\nu}_{i=1}(1+c\gamma^{n+m+1}_{0}s^{m}_{i-1}\mu_{i-1}\Gamma(h_{i-1}-h_{i}))c\gamma^{n+m+1}_{0}s^{m}_{\nu}\mu_{\nu}\Gamma(h_{\nu}-h_{\nu+1})\notag\\
&\leq c\gamma^{n+m+1}_{0}s^{m}_{\nu}\mu_{\nu}\Gamma(h_{\nu}-h_{\nu+1}).
  \end{align}
   Therefore, $\lim\limits_{\nu\rightarrow \infty}\mathscr{W}^{\nu}:=\mathscr{W}$, as well as $\mathscr{F}_{\infty}=\lim\limits_{\nu\rightarrow\infty}\hat{\mathscr{F}}_{\nu}$, we thus deduce that
\begin{equation*}
\mathscr{W}\circ\mathscr{F}_{\infty}=\mathscr{F}\circ\mathscr{W}.
\end{equation*}

It remains to consider the following convergence. By Lemma \ref{lemma-4.5}, one has
\begin{align}
&\omega_{0}(\hat{r}_{1})+f_{0,0}(\hat{r}_{1})=\omega_{0}(r_{0}),\notag\\
&\omega_{0}(\hat{r}_{2})+f_{0,0}(\hat{r}_{2})+f_{0,1}(\hat{r}_{2})=\omega_{0}(r_{0}),\notag\\
&~~~~~~~~~~~~~~~~~~~~~~~~~~~\vdots\notag\\
\label{equation-5.48}
&\omega_{0}(\hat{r}_{\nu})+f_{0,0}(\hat{r}_{\nu})+\cdots+f_{0,\nu-1}(\hat{r}_{\nu})=\omega_{0}(r_{0}).
\end{align}
Taking limits on both sides of \eqref{equation-5.48}, we obtain
\begin{equation*}
\omega_{0}(\hat{r}_{\infty})+\sum^{\infty}_{i=0}f_{0,i}(\hat{r}_{\infty})=\omega_{0}(r_{0})=\omega(r_{*}),
\end{equation*}
that is, for given $r_{*}\in E^{\circ}$, the mapping $\mathscr{F}_{\infty}$ on $\mathcal{D}_{\infty}$ becomes the integrable rotation
\begin{equation*}\mathscr{F}_{\infty}:
\begin{cases}
\theta^{1}_{\infty}=\theta_{\infty}+\omega(r_{*}),\\
r^{1}_{\infty}=r_{\infty}-\tilde{r},
\end{cases}
\end{equation*}
where $\omega(r_{*})=p$, and $ p $ is given in advance. Besides,  $\tilde{r}=\sum\limits^{\infty}_{i=0}r^{*}_{i} \to 0$ as $ \varepsilon \to 0 $. This completes the proof of the  frequency-preserving KAM persistence in Theorem \ref{theorem-1}.

\subsection{Proof of Corollary \ref{cor-1}}
This subsection is devoted to the proof of  Corollary \ref{cor-1}. We will show that the assumption on $ \omega(r) $ here contains the transversality condition $({\rm A1})$. In fact, the frequency mapping $\omega(r)$ is injective on $E^{\circ}$, and consequently, it is surjective from $E^{\circ}$ to $\omega({E^{\circ}})$.  Therefore it is a homeomorphism and by Nagumo's theorem, we have that the Brouwer degree $\deg(\omega,E^{\circ},p)=\pm 1$ for some $ p \in \omega(E^{\circ})^\circ $. Finally, by applying Theorem \ref{theorem-1} we directly obtain the desired frequency-preserving KAM persistence in Corollary \ref{cor-1}.

\subsection{Proof of Theorem \ref{theorem-2}}
This section outlines the proof of Theorem \ref{theorem-2}. We focus on describing the parts of the proof of Theorem \ref{theorem-2}  that differ from those of Theorem \ref{theorem-1}. We shall see that $\varpi_{1}\leq \varpi_{2}$ is used directly by assumption $({\rm A3})$ in the process of proving Lemma \ref{lemma-5.2}. Moreover, there is no need to construct a conjugation of $\bar{\mathscr{F}}_{\nu+1}$ since the mapping we considered in Theorem \ref{theorem-2} does not have the intersection property. The parts we leave out in this section are similar to those described in Section \ref{SEC4}.

Consider the mapping $\mathscr{F}:\mathbb{T}^{n}\times \Lambda\rightarrow \mathbb{T}^{n}$ defined by
\begin{equation*}
\theta^{1}=\theta+\omega(\xi)+\e f(\theta,\xi,\e),
\end{equation*}
where $\xi\in\Lambda\subset \mathbb{R}^{n}$ is a parameter, $\Lambda$ is a connected closed bounded domain with interior points.  For $\theta_{0}\in D(h_{0})$, and $\xi_{0}\in\Lambda_{0}:=\{\xi\in\Lambda|\ |\xi-\xi_{0}|<{\rm dist}\ (\xi_{0},\partial \Lambda)\}$, denote
\begin{equation*}
\theta^{1}_{0}=\theta_{0}+\omega(\xi_{0})+f_{0}(\theta_{0},\xi_{0},\e),
\end{equation*}
where $\omega(\xi_{0})=\omega(\xi_{*})=q$ for given $ q\in \mathbb{R}^n $ in advance and $f_{0}(\theta_{0},\xi_{0},\e)=\e f(\theta_{0},\xi_{0},\e)$. The estimate on $||f_{0}||_{D(h_{0})}$ is
\begin{equation*}
||f_{0}||_{D(h_{0})}\leq \gamma^{n+m+2}_{0}\mu_{0},
\end{equation*}
if
\begin{equation*}
\e^{\frac{3}{4}}\e^{-\frac{1}{8\eta(m+1)}}||f||_{D(h_{0})}\leq 1.
\end{equation*}
Set
\begin{equation*}
\Lambda_{\nu}:=\{\xi:{\rm dist}\ (\xi,\partial\Lambda_{\nu-1}) <\mu_{\nu-1}\}, \;\; \nu \in \mathbb{N}^+.
\end{equation*}
Suppose that after $\nu$ KAM steps, for $\theta_{\nu}\in D(h_{\nu})$ and $\xi_{\nu}\in\Lambda_{\nu}$, the mapping becomes
\begin{equation*}
\theta^{1}_{\nu}=\theta_{\nu}+\omega(\xi_{0})+f_{\nu}(\theta_{\nu},\xi_{\nu},\e),
\end{equation*}
and one has
\begin{equation*}
||f_{\nu}||_{D(h_{\nu})}\leq \gamma^{n+m+2}_{0}\mu_{\nu}.
\end{equation*}
Introduce a transformation $\mathscr{U}_{\nu+1}:={\rm id}+U_{\nu+1}$ that satisfies $\mathscr{U}_{\nu+1}\circ\bar{\mathscr{F}}_{\nu+1}=\mathscr{F}_{\nu}\circ\mathscr{U}_{\nu+1}$. Then the conjugation $\bar{\mathscr{F}}_{\nu+1}$ of mapping $\mathscr{F}_{\nu}$ is
\begin{equation*}
\bar{\mathscr{F}}_{\nu+1}: \theta^{1}_{\nu+1}=\theta_{\nu+1}+\omega(\xi_{0})+\bar{f}_{\nu+1}(\theta_{\nu+1},\xi_{\nu},\e).
\end{equation*}
We obtain the homological equation
\begin{equation}\label{eq-56}
U_{\nu+1}(\theta_{\nu+1}+\omega(\xi_{0}))-U_{\nu+1}(\theta_{\nu+1})=\mathcal{T}_{K_{\nu+1}}f_{\nu}(\theta_{\nu+1},\xi_{\nu},\e),
\end{equation}
and the new perturbation
\begin{align*}
\bar{f}_{\nu+1}(\theta_{\nu+1},\xi_{\nu},\e)&=f_{\nu}(\theta_{\nu+1}+U_{\nu+1},\xi_{\nu},\e)-f_{\nu}(\theta_{\nu+1},\xi_{\nu},\e)+\mathcal{R}_{K_{\nu+1}}f_{\nu}(\theta_{\nu+1},\xi_{\nu},\e)\\
&+U_{\nu+1}(\theta_{\nu+1}+\omega(\xi_{0}),\xi_{\nu},\e)-U_{\nu+1}(\theta_{\nu+1}+\omega(\xi_{0})+\bar{f}_{\nu+1},\xi_{\nu},\e).
\end{align*}
The homological equation \eqref{eq-56} is uniquely solvable on $D(h_{\nu+1})$, and the new perturbation $\bar{f}_{\nu+1}$ can be solved by the implicit function theorem.

To keep the frequency unchanged, construct a translation
	\begin{equation}\label{5.6}
	\mathscr{V}_{\nu+1}: \theta_{\nu+1}\rightarrow \theta_{\nu+1},\quad \tilde{\xi}_{\nu}\rightarrow \tilde{\xi}_{\nu}+\xi_{\nu+1}-\xi_{\nu},
\end{equation}
where $\xi_{\nu+1}$ is to be determined. This translation changes the parameter alone, and the mapping becomes $\mathscr{F}_{\nu+1}=\bar{\mathscr{F}}_{\nu+1}\circ\mathscr{V}_{\nu+1}$, that is,
\begin{equation*}
	\mathscr{F}_{\nu+1}: \theta^{1}_{\nu+1}=\theta_{\nu+1}+\omega(\xi_{0})+f_{\nu+1}(\theta_{\nu+1},\xi_{\nu+1},\e),
\end{equation*}
where	the frequency $\omega(\xi_{0})=\omega(\xi_{\nu+1})+\sum\limits^{\nu}_{i=0}f_{0,i}(\xi_{\nu+1})$. The following lemma states that the frequency is preserved.

 \begin{lemma}\label{lemma-5.2}
 Assume that
 \begin{equation*}
 ({\rm H6})\qquad ||\sum^{\nu}_{i=0}f_{0,i}(\xi_{\nu})||_{\Lambda_{\nu}}\leq c\mu^{\frac{1}{2}}_{0}.
 \end{equation*}
 Then there exists a $\xi_{\nu+1}\in B_{c\mu_{\nu}}(\xi_{\nu})\subset \Lambda_{0}$ such that
 \begin{equation*}
 \omega(\xi_{\nu+1})+\sum^{\nu}_{i=0}f_{0,i}(\xi_{\nu+1})=\omega(\xi_{0}).
 \end{equation*}
 \end{lemma}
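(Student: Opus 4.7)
The plan is to prove Lemma \ref{lemma-5.2} by induction on $\nu$, mirroring the structure of Lemma \ref{lemma-4.5} but replacing conditions (A1), (A3) with their parameter-side analogues (B1), (B3). The base case $\nu=0$ reduces to the tautology $\omega(\xi_0)=\omega(\xi_0)$. For the inductive step, assuming the frequency equation has been solved at stages $1,\dots,\nu$ with $\xi_j \in B_{c\mu_{j-1}}(\xi_{j-1}) \subset \Lambda_0$, I need to produce $\xi_{\nu+1}$ satisfying both the frequency identity and the Cauchy-type bound $|\xi_{\nu+1}-\xi_\nu| \leq c\mu_\nu$.

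For existence, I would invoke homotopy invariance of the Brouwer degree. Hypothesis (H6) forces $\sum_{i=0}^\nu f_{0,i}$ to be of size $\mathcal{O}(\mu_0^{1/2})$ on $\Lambda_\nu$, which for $\e_0$ small enough does not alter the non-zero degree supplied by (B1): on a ball $B(\xi_0,\delta) \subset \Lambda^\circ$ one has
$$\deg\left(\omega(\cdot)+\sum_{i=0}^{\nu}f_{0,i}(\cdot),\,B(\xi_0,\delta),\,\omega(\xi_0)\right)=\deg\left(\omega(\cdot),\,B(\xi_0,\delta),\,q\right)\neq 0,$$
so at least one solution $\xi_{\nu+1}$ exists. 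The quantitative location of $\xi_{\nu+1}$ near $\xi_\nu$ then comes from subtracting the frequency equations at stages $\nu$ and $\nu+1$, which yields
$$f_{0,\nu}(\xi_{\nu+1}) = \bigl(\omega(\xi_\nu)-\omega(\xi_{\nu+1})\bigr)+\sum_{i=0}^{\nu-1}\bigl(f_{0,i}(\xi_\nu)-f_{0,i}(\xi_{\nu+1})\bigr).$$
Lower-bounding the first bracket by the weak convexity (B3) gives $\varpi_2(|\xi_\nu-\xi_{\nu+1}|)$, while each summand in the tail is controlled by $c\mu_i\,\varpi_1(|\xi_\nu-\xi_{\nu+1}|)$ using the $\varpi_1$-continuity inherited from the $\nu$-th step bound on $f_\nu$. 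Because $\varpi_1\leq\varpi_2$ by assumption in (B3) and $\sum_{i}\mu_i$ can be made arbitrarily small, the tail is absorbed into half of the main term, leaving $|f_{0,\nu}(\xi_{\nu+1})|\geq \tfrac{1}{2}\varpi_2(|\xi_\nu-\xi_{\nu+1}|)$. Inverting via $\varpi_2^{-1}$, combining with the size bound $|f_{0,\nu}|\leq c\mu_\nu$, and using $\varlimsup_{x\to 0^+}x/\varpi_1(x)<\infty$ from Definition \ref{def-1} together with $\varpi_1\leq\varpi_2$ gives $|\xi_\nu-\xi_{\nu+1}|\leq c\mu_\nu$, completing the induction.

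The main obstacle, and the conceptual novelty relative to Lemma \ref{lemma-4.5}, is that here $f$ is only continuous in $\xi$ rather than analytic, so $\varpi_1$ may be far weaker than the Lipschitz modulus $x$; indeed by Corollary \ref{COROMH} it can be nowhere H\"older. The induction step can therefore collapse unless the frequency's non-degeneracy modulus $\varpi_2$ dominates the perturbation's modulus $\varpi_1$, which is precisely the compatibility embedded in (B3). A secondary point is to verify that $\xi_{\nu+1}$ remains inside $\Lambda_0$: this is immediate from the telescoping estimate $\sum_j|\xi_{j+1}-\xi_j|\leq c\sum_j\mu_j$, which is summable because $\mu_{\nu+1}=\mu_\nu^{1+\rho}$ super-exponentially decays, and from choosing $\e_0$ small enough that this total stays below $\mathrm{dist}(\xi_0,\partial\Lambda)$. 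No conjugation step is required here since the mapping on $\mathbb{T}^n$ does not carry an intersection property to preserve.
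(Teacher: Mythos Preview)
Your proposal is correct and follows essentially the same approach as the paper's own proof: induction on $\nu$, existence of $\xi_{\nu+1}$ via homotopy invariance of the Brouwer degree under the small perturbation $\sum_i f_{0,i}$ guaranteed by {\rm (H6)} and {\rm (B1)}, then the Cauchy bound $|\xi_{\nu+1}-\xi_\nu|\leq c\mu_\nu$ by subtracting consecutive frequency equations, applying the weak convexity {\rm (B3)} to dominate the $\varpi_1$-controlled tail, and inverting $\varpi_2$. Your observation that the key difference from Lemma \ref{lemma-4.5} is the need to assume $\varpi_1\leq\varpi_2$ directly in {\rm (B3)} (rather than obtaining it for free from analyticity) is exactly the point the paper emphasizes.
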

 \begin{proof}The proof is an induction on $\nu\in\mathbb{N}$. When $\nu=0$, obviously, $\omega(\xi_{0})=\omega(\xi_{0})$. When $\nu\geq 1$, let
 \begin{equation}\label{eq-5.6}
 \omega(\xi_{j})+\sum^{\nu-1}_{i=0}f_{0,i}(\xi_{j})=\omega(\xi_{0}),\qquad j=1,2,\cdots,\nu,
 \end{equation}
 then
 \begin{equation}\label{eq-5.7}
 \omega(\xi_{\nu+1})+\sum^{\nu}_{i=0}f_{0,i}(\xi_{\nu+1})=\omega(\xi_{0})
 \end{equation}
needs to be verified. Taking the assumptions $({\rm B1})$ and $({\rm B3})$, one has
\begin{equation}\label{DISA11}
\deg\left(\omega(\xi_{\nu+1})+\sum^{\nu}_{i=0}f_{0,i}(\xi_{\nu+1}),\Lambda_{0},\omega(\xi_{0})\right)=\deg\left(\omega(\xi_{\nu+1}),\Lambda_{0},\omega(\xi_{0})\right)\neq0.
\end{equation}
This means that there exists at least one parameter $\xi_{\nu+1}\in\Lambda_{0}$ such that $\omega(\xi_{\nu+1})+\sum\limits^{\nu}_{i=0}f_{0,i}(\xi_{\nu+1})=\omega(\xi_{0})$ holds.
Next, let us verify that $\xi_{\nu+1}\in B_{c\mu_{\nu}}(\xi_{\nu})\subset \Lambda_{\nu}$. From \eqref{eq-5.6} and \eqref{eq-5.7}, one has
\begin{equation*}
\omega(\xi_{\nu})+\sum^{\nu-1}_{i=0}f_{0,i}(\xi_{\nu})=\omega(\xi_{\nu+1})+\sum^{\nu}_{i=0}f_{0,i}(\xi_{\nu+1}),
\end{equation*}
i.e.,
\begin{equation*}
f_{0,\nu}(\xi_{\nu+1})=\omega(\xi_{\nu})-\omega(\xi_{\nu+1})+\sum^{\nu-1}_{i=0}(f_{0,i}(\xi_{\nu})-f_{i}(\xi_{\nu+1})).
\end{equation*}
Since $||f_{i}||_{D(h_{i+1})}\leq \gamma^{n+m+2}_{0}\mu_{i}$ for each $ i\geq 0$, one has $[f_{0,i}]_{\varpi_{1}}\leq c\mu_{i}$. Therefore,
\begin{equation*}
| f_{0,i}(\xi_{\nu})-f_{0,i}(\xi_{\nu+1})|\leq c\mu_{i}\varpi_{1}(| \xi_{\nu}-\xi_{\nu+1}|).
\end{equation*}
Following $({\rm A3})$, we have
\begin{align*}
| f_{0,\nu}(\xi_{\nu+1})|&=|\omega(\xi_{\nu})-\omega(\xi_{\nu+1})+\sum^{\nu-1}_{i=0}(f_{0,i}(\xi_{\nu})-f_{0,i}(\xi_{\nu+1}))|\\
&\geq|\omega(\xi_{\nu})-\omega(\xi_{\nu+1})|-\sum^{\nu-1}_{i=0}|f_{0,i}(\xi_{\nu})-f_{0,i}(\xi_{\nu+1})|\\
&\geq\varpi_{2}(| \xi_{\nu}-\xi_{\nu+1}|)-c\varpi_{1}(|\xi_{\nu}-\xi_{\nu+1}|)\sum^{\nu-1}_{i=0}\mu_{i}\\
&\geq\frac{\varpi_{2}(|\xi_{\nu}-\xi_{\nu+1}|)}{2}.
\end{align*}
The last inequality is due to $\varpi_{1}\leq \varpi_{2}$, and $\e$ is sufficiently small such that $c(\sum\limits^{\nu-1}_{i=0}\mu_{i})\leq \frac{1}{2}$. Thus,
\begin{equation}\label{2.8}
|\xi_{\nu}-\xi_{\nu+1}|\leq \varpi^{-1}_{2}(2|f_{0,\nu}(\xi_{\nu+1})|)\leq \varpi^{-1}_{2}(2c \mu_{\nu})\leq \varpi^{-1}_{1}(2c\mu_{\nu})\leq c\mu_{\nu},
\end{equation}
which is similar to \eqref{equation-4.33}. Moreover,
\begin{equation*}
| \xi_{\nu+1}-\xi_{0}|\leq\sum^{\nu}_{i=0}| \xi_{i+1}-\xi_{i}|\leq c\sum^{\nu}_{i=0}\mu_{i}\leq 2c\mu_{0}.
\end{equation*}
The above illustrates that $\{\xi_{\nu}\}$ is a Cauchy sequence and $\xi_{\nu+1}\in B_{c\mu_{\nu}}(\xi_{\nu})\subset \Lambda_{0}$.
 \end{proof}

We now summarize the standard convergence. Denote
\[\mathscr{U}^{\nu+1}:=\mathscr{U}_{1}\circ\cdots\circ\mathscr{U}_{\nu+1},\]
and
\[\mathscr{W}^{\nu+1}:=\mathscr{W}_{1}\circ\cdots\circ\mathscr{W}_{\nu+1}:=(\mathscr{U}_{1}\circ\mathscr{V}_{1})\circ\cdots\circ(\mathscr{U}_{\nu+1}\circ\mathscr{V}_{\nu+1}).\]
Both of them are convergent,  one therefore has $\mathscr{U}:=\lim\limits_{\nu\rightarrow\infty}\mathscr{U}^{\nu}$ and $\mathscr{W}:=\lim\limits_{\nu\rightarrow\infty}\mathscr{W}^{\nu}$. Moreover, we get
\[\omega(\xi_{\infty})+\sum\limits^{\infty}_{i=0}f_{0,i}(\xi_{\infty})=\omega(\xi_{0})=\omega(\xi_{*})=q\]
for $\xi_{*}\in \Lambda^{\circ}$ fixed. For \eqref{UUUU}, we know that the conjugation for dynamical system \eqref{equation-3.1} only focuses on the angular variable $ \theta \in \mathbb{T}^n $. As mentioned in \eqref{5.6},  the angular variable is unchanged under the translation. Therefore, one has
\begin{equation*}
	\mathscr{U}\circ\mathscr{F}_{\infty}=\mathscr{F}\circ\mathscr{U},
\end{equation*}
here $\mathscr{F}_{\infty}$ denotes the limit of $\mathscr{F}_{\nu}$ on $\mathcal{D}_{\infty}$. More precisely, one obtains the integrable rotation
\begin{equation*}
	\mathscr{F}_{\infty}:\theta^{1}_{\infty}=\theta_{\infty}+\omega(\xi_{*})
\end{equation*}
with frequency $ \omega(\xi_{*})=q $, where $ q $ is given in advance. In other words, we prove the KAM persistence with prescribed frequency-preserving. This completes the proof of Theorem \ref{theorem-2}.

% \section{Appendix}

 \section*{Acknowledgements}
This work was supported by National Basic Research Program of China (grant No. 2013CB834100), National Natural Science Foundation of China (grant No. 11571065, 11171132, 12071175), Project of Science and Technology Development of Jilin Province, China (grant No. 2017C028-1, 20190201302JC), and Natural Science Foundation of Jilin Province (grant No. 20200201253JC).

\end{document}